\newcommand{\im}{\operatorname{im}}
\newtheorem{theorem}{Theorem}[section]
\newtheorem{lemma}[theorem]{Lemma}
\newtheorem{proposition}[theorem]{Proposition}
\theoremstyle{definition}
\theoremstyle{remark}
\numberwithin{equation}{section}
\begin{document}

\title[On the mod $2$ cohomology algebra of oriented Grassmannians]{On the mod $2$ cohomology algebra of oriented Grassmannians}

\author{Milica Jovanovi\'c}
\address{University of Belgrade,
  Faculty of mathematics,
  Studentski trg 16,
  Belgrade,
  Serbia}
\email{milica.jovanovic@matf.bg.ac.rs}

\author{Branislav I.\ Prvulovi\'c}
\address{University of Belgrade,
  Faculty of mathematics,
  Studentski trg 16,
  Belgrade,
  Serbia}
\email{branislav.prvulovic@matf.bg.ac.rs}

\thanks{The first author was partially supported by the Science Fund of the Republic of Serbia, Grant No.\ 7749891, Graphical Languages -- GWORDS. The second author was partially supported by the Ministry of Science, Technological Development and Innovations of the Republic of Serbia [contract no.\ 451-03-47/2023-01/200104].}

\subjclass[2020]{Primary 55R40; Secondary 13P10}



\keywords{Grassmann manifolds, Stiefel--Whitney classes, Gr\"obner bases}

\begin{abstract}
For $n\in\{2^t-3,2^t-2,2^t-1\}$ ($t\ge3$) we study the cohomology algebra $H^*(\widetilde G_{n,3};\mathbb Z_2)$ of the Grassmann manifold $\widetilde G_{n,3}$ of oriented $3$-dimensional subspaces of $\mathbb R^n$. A complete description of $H^*(\widetilde G_{n,3};\mathbb Z_2)$ is given in the cases $n=2^t-3$ and $n=2^t-2$, while in the case $n=2^t-1$ we obtain a description complete up to a coefficient from $\mathbb Z_2$.
\end{abstract}

\maketitle



\section{Introduction and statement of main results}
\label{intro}

Let $G_{n,k}$ be the Grassmann manifold of $k$-dimensional subspaces in $\mathbb R^n$ and let $\widetilde G_{n,k}$ be the Grassmann manifold of \em oriented \em $k$-dimensional subspaces in $\mathbb R^n$. Over both manifolds there are canonical $k$-dimensional vector bundles, denoted by $\gamma_{n,k}$ and $\widetilde\gamma_{n,k}$ respectively. There are several ways to describe the mod $2$ cohomology algebra $H^*(G_{n,k};\mathbb Z_2)$, one of them being due to Borel's famous result \cite{Borel}, which establishes that every element in $H^*(G_{n,k};\mathbb Z_2)$ can be expressed as a polynomial in Stiefel--Whitney classes $w_i(\gamma_{n,k})\in H^i(G_{n,k};\mathbb Z_2)$, $1\le i\le k$. On the other hand, the situation in $H^*(\widetilde G_{n,k};\mathbb Z_2)$ is more complicated, although $\widetilde G_{n,k}$ is simply connected, and so $w_1(\widetilde\gamma_{n,k})=0$. Namely, besides the Stiefel--Whitney classes $w_i(\widetilde\gamma_{n,k})\in H^i(\widetilde G_{n,k};\mathbb Z_2)$, which we from now on abbreviate to $\widetilde w_i$, $2\le i\le k$, there are some additional indecomposable classes (a positive dimensional cohomo\-logy class is \em indecomposable \em if it cannot be written as a polynomial in classes of smaller dimension). Put in other words, the subalgebra of $H^*(\widetilde G_{n,k};\mathbb Z_2)$ generated by $\widetilde w_2,\ldots,\widetilde w_k$ does not coincide with $H^*(\widetilde G_{n,k};\mathbb Z_2)$. There is a description of this subalgebra as a quotient of the polynomial algebra in $\widetilde w_2,\ldots,\widetilde w_k$ (analogous to the Borel description of $H^*(G_{n,k};\mathbb Z_2)$), but the whole algebra $H^*(\widetilde G_{n,k};\mathbb Z_2)$ is not completely described in general.

The case $k=1$ is trivial, since $\widetilde G_{n,1}$ is the $(n-1)$-dimensional sphere $S^{n-1}$. The algebra $H^*(\widetilde G_{n,k};\mathbb Z_2)$ is known in the case $k=2$ as well. In \cite{KorbasRusin:Palermo} one can find a complete description of this algebra for $k=2$. In the same paper, the additional particular case $(n,k)=(6,3)$ was also fully calculated (it suffices to study $\widetilde G_{n,k}$ with $n\ge2k$, since $\widetilde G_{n,k}\cong\widetilde G_{n,n-k}$).

For $k=3$ and $n\in\{2^t-3,2^t-2,2^t-1,2^t\}$ ($t\ge3$), significant results in this regard were obtained by Basu and Chakraborty in \cite{BasuChakraborty}. In that paper the authors showed that, besides Stiefel--Whitney classes $\widetilde w_2$ and $\widetilde w_3$, there is exactly one (up to addition of a polynomial in $\widetilde w_2$ and $\widetilde w_3$) indecomposable class $\widetilde a$ in $H^*(\widetilde G_{n,3};\mathbb Z_2)$, and they identified the algebra $H^*(\widetilde G_{n,3};\mathbb Z_2)$ as the quotient of the polynomial algebra $\mathbb Z_2[w_2,w_3,a]$ by a certain ideal. In the generating set of that ideal some polynomials in $w_2$ and $w_3$ remained undetermined. This description was completed in the case $n=2^t$ ($t\ge3$) in \cite{CP}. For $n\in\{2^t-3,2^t-2,2^t-1\}$ ($t\ge3$), the description is essentially the following (\cite[Theorem B(2)]{BasuChakraborty}):
\begin{equation}\label{isomorphism}
H^*(\widetilde G_{n,3};\mathbb Z_2)\cong\frac{\mathbb Z_2[w_2,w_3,a_{2^t-4}]}{\big(g_{n-2},g_{n-1},g_n,a_{2^t-4}^2-P_1a_{2^t-4}-P_2\big)},
\end{equation}
where $P_1$ and $P_2$ are some (unknown) polynomials in $w_2$ and $w_3$, with $P_2=0$ for $n\in\{2^t-3,2^t-2\}$. Via the isomorphism (\ref{isomorphism}), the cosets of $w_2$ and $w_3$ from the right-hand side correspond to the Stiefel--Whitney classes $\widetilde w_2\in H^2(\widetilde G_{n,3};\mathbb Z_2)$ and $\widetilde w_3\in H^3(\widetilde G_{n,3};\mathbb Z_2)$ respectively, while the coset of $a_{2^t-4}$ corresponds to the indecomposable class $\widetilde a_{2^t-4}\in H^{2^t-4}(\widetilde G_{n,3};\mathbb Z_2)$. The polynomials $g_r$, $r\ge0$, appearing in (\ref{isomorphism}), are well-known polynomials in $w_2$ and $w_3$.

This paper deals with the problem of computing the polynomials $P_1$ and $P_2$ from (\ref{isomorphism}). For $n\in\{2^t-3,2^t-2\}$, there is actually only $P_1$ to be determined, and we prove that $P_1=0$ in these cases (more precisely, that $P_1$ can be chosen to be the zero-polynomial in order for the isomorphism (\ref{isomorphism}) to hold). This means that $H^*(\widetilde G_{n,3};\mathbb Z_2)$ is isomorphic to the tensor product of $\mathbb Z_2[w_2,w_3]/(g_{n-2},g_{n-1},g_n)$ and the exterior algebra $\Lambda(a_{2^t-4})\cong\mathbb Z_2[a_{2^t-4}]/(a_{2^t-4}^2)$, that is, $\widetilde a_{2^t-4}^2=0$ in $H^*(\widetilde G_{n,3};\mathbb Z_2)$. Therefore, we obtain a complete description of $H^*(\widetilde G_{n,3};\mathbb Z_2)$ in these cases. In the case $n=2^t-1$ we prove that $\widetilde a_{2^t-4}^2\neq0$ in $H^*(\widetilde G_{n,3};\mathbb Z_2)$, and establish that $P_1=g_{2^t-4}$ and $P_2=\gamma w_2^{2^t-4}$ for some $\gamma\in\mathbb Z_2$. So the main results of this paper are given in the following theorem (it is well known that $g_{2^t-3}=0$, and so this polynomial is redundant in (\ref{isomorphism})).

\begin{theorem}\label{mainthm}
For all integers $t\ge3$ we have the following isomorphisms of graded algebras (where $|w_i|=i$, $i=2,3$, and $|a_{2^t-4}|=2^t-4$):
\begin{itemize}
\item[(a)] $\displaystyle H^*(\widetilde G_{2^t-1,3};\mathbb Z_2)\cong\frac{\mathbb Z_2[w_2,w_3,a_{2^t-4}]}{\big(g_{2^t-2},g_{2^t-1},a_{2^t-4}^2-g_{2^t-4}a_{2^t-4}-\gamma w_2^{2^t-4}\big)}$,

    \noindent for some $\gamma\in\mathbb Z_2$;
\item[(b)] $\displaystyle H^*(\widetilde G_{2^t-2,3};\mathbb Z_2)\cong\frac{\mathbb Z_2[w_2,w_3]}{(g_{2^t-4},g_{2^t-2})}\otimes\Lambda(a_{2^t-4})$;
\item[(c)] $\displaystyle H^*(\widetilde G_{2^t-3,3};\mathbb Z_2)\cong\frac{\mathbb Z_2[w_2,w_3]}{(g_{2^t-5},g_{2^t-4})}\otimes\Lambda(a_{2^t-4})$.
\end{itemize}
\end{theorem}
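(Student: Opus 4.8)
The plan is to determine the $\widetilde a_{2^t-4}$-component $P_1$ of $\widetilde a_{2^t-4}^{\,2}$ by transporting the question to the ordinary Grassmannian $G_{n,3}$ through the double cover $p\colon\widetilde G_{n,3}\to G_{n,3}$, and then to pin down $P_2$ by a Gr\"obner-basis analysis of the Stiefel--Whitney subalgebra. Throughout set $d=2^t-4$ and $a=\widetilde a_{d}$, let $\bar w_r\in H^r(G_{n,3};\mathbb Z_2)$ be the dual Stiefel--Whitney classes (so $g_r=\bar w_r|_{w_1=0}$), and recall that via \eqref{isomorphism} one has $H^*(\widetilde G_{n,3};\mathbb Z_2)=B\oplus B\cdot a$ as a module over $B:=\mathbb Z_2[w_2,w_3]/(g_{n-2},g_{n-1},g_n)=\im p^*$, the subalgebra generated by $\widetilde w_2,\widetilde w_3$.

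First I would use the transfer (Gysin) exact sequence of the double cover,
\[
\cdots\to H^{i}(G_{n,3})\xrightarrow{\cup w_1}H^{i+1}(G_{n,3})\xrightarrow{p^*}H^{i+1}(\widetilde G_{n,3})\xrightarrow{\ \partial\ }H^{i+1}(G_{n,3})\to\cdots,
\]
in which $\im p^*=B$ and $\im\partial=\ker(\cup w_1)$. Since $g_{2^t-3}=0$, the polynomial $\bar w_{2^t-3}$ is divisible by $w_1$; moreover $\bar w_{2^t-3}=0$ is one of the three defining relations of $H^*(G_{n,3})$ for \emph{every} $n\in\{2^t-3,2^t-2,2^t-1\}$, because $2^t-3\in\{n-2,n-1,n\}$ in the three cases. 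Hence $\alpha:=\bar w_{2^t-3}/w_1\in H^{d}(G_{n,3})$ satisfies $w_1\alpha=0$ and generates $\ker(\cup w_1)$ as a free rank-one $B$-module, with $\partial a=\alpha$ (the unique nonzero element of the one-dimensional group $\ker(\cup w_1)\cap H^{d}$). The projection formula $\partial(p^*(x)\cdot y)=x\cdot\partial y$ applied to $a^2=P_1a+P_2$ gives $\partial(a^2)=P_1\cdot\alpha$ (using $\partial P_2=0$ as $P_2\in B=\ker\partial$), while the transfer commutes with Steenrod squares, so also $\partial(a^2)=\partial\,\operatorname{Sq}^{d}a=\operatorname{Sq}^{d}\alpha=\alpha^2$. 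Comparing the two expressions inside the free module $B\alpha$ reduces the determination of $P_1$ to the single identity
\[
\alpha^2=g_{2^t-4}\,\alpha\qquad\text{in }H^*(G_{n,3};\mathbb Z_2).
\]

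The heart of the matter --- and the step I expect to be the main obstacle --- is proving this identity uniformly in $t$. I would fix a Gr\"obner basis of $(\bar w_{n-2},\bar w_{n-1},\bar w_n)\subset\mathbb Z_2[w_1,w_2,w_3]$ and reduce $\alpha^2-g_{2^t-4}\alpha$ to normal form, rewriting $\alpha^2$ via $w_1\alpha=\bar w_{2^t-3}$ and the recursion $\bar w_r=w_1\bar w_{r-1}+w_2\bar w_{r-2}+w_3\bar w_{r-3}$; the difficulty is that the monomial bookkeeping must be controlled simultaneously for all $t$, which is exactly where the binary structure of $2^t$ enters. Granting the identity, $P_1=g_{2^t-4}$ in $B$, and since $g_{2^t-4}$ is a defining relation of $B$ precisely when $2^t-4\in\{n-2,n-1,n\}$, i.e. for $n\in\{2^t-3,2^t-2\}$ but not for $n=2^t-1$, we obtain $P_1=0$ in cases (b), (c) and $P_1=g_{2^t-4}\neq0$ (as the Gr\"obner basis confirms) in case (a). Together with the input $P_2=0$ for $n\in\{2^t-3,2^t-2\}$ coming from \eqref{isomorphism}, this already yields (b) and (c).

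Finally, for (a) I would determine $P_2\in B^{2d}$. Replacing the generator $a$ by $a+b$ with $b\in B^{d}$ fixes $P_1$ and alters $P_2$ by $g_{2^t-4}b+b^2$, so $P_2$ is well defined modulo this subspace. I would read off the graded pieces of $B$ from the Gr\"obner basis of $(g_{n-2},g_{n-1},g_n)$ --- equivalently from the Poincar\'e series $P_B(x)=(1-x^{n-1})(1-x^{n})/\big((1-x^2)(1-x^3)\big)$ for $n=2^t-1$ --- and verify that its top nonzero group $B^{2d}$ is one-dimensional, spanned by $w_2^{2^t-4}$. This forces $P_2=\gamma\,w_2^{2^t-4}$ for some $\gamma\in\mathbb Z_2$; the coefficient $\gamma$ is a genuine invariant that remains invisible to these methods (for small $t$ the indeterminacy subspace $\{g_{2^t-4}b+b^2\}$ is already trivial), which accounts for the residual ambiguity in the statement of part~(a).
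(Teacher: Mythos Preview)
Your strategy---computing $P_1$ from the identity $\partial(a^2)=\alpha^2=P_1\alpha$ via the transfer and its compatibility with Steenrod squares---is coherent and genuinely different from the paper's route. The paper never works inside $H^*(G_{n,3})$ with the class $\alpha$. For part~(a) it instead chooses $\widetilde a_{2^t-4}$ so that $\widetilde i^*(\widetilde a_{2^t-1})=\widetilde w_3\,\widetilde a_{2^t-4}$, imports the relation $\widetilde a_{2^t-1}^{\,2}=0$ in $H^*(\widetilde G_{2^t,3})$ from~\cite{CP}, expands $\widetilde a_{2^t-4}^{\,2}$ in the additive basis produced by Fukaya's two-variable Gr\"obner basis for $J_{2^t-1,3}$ (so that $\widetilde w_3^2\widetilde a_{2^t-4}^{\,2}=0$ forces all coefficients in terms of one $\lambda_0$), and then shows $\lambda_0=1$ by restricting along $\widetilde j^*$ to $\widetilde G_{2^t-2,2}$, whose cohomology is known. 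Parts~(b) and~(c) follow by pulling the resulting relation back along $\widetilde i^*$ and observing that $\widetilde g_{2^t-4}$ and $\widetilde w_2^{2^t-4}$ vanish there. Your treatment of $P_2$ (that $B^{2d}$ is one-dimensional, generated by $w_2^{2^t-4}$) agrees with the paper's.

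The genuine gap is exactly where you flag it: you do not prove $\alpha^2=g_{2^t-4}\alpha$ in $H^*(G_{n,3})$, and this identity carries essentially the full content of the theorem. Your proposed attack (reduce $\alpha^2-g_{2^t-4}\alpha$ using a Gr\"obner basis of $I_{n,3}\subset\mathbb Z_2[w_1,w_2,w_3]$) would require a three-variable Gr\"obner basis uniform in $t$, which is not available in the references and is considerably harder than the two-variable Fukaya basis; the paper sidesteps this entirely by borrowing the topological input $\widetilde a_{2^t-1}^{\,2}=0$. Two smaller points also need care: for $n\in\{2^t-3,2^t-2\}$ you must verify $\alpha\neq0$ in $H^{2^t-4}(G_{n,3})$ (not automatic, since the ideal already meets that degree), and the claim that $B\alpha$ is free of rank one over $B$ deserves the one-line justification that $\partial|_{Ba}$ is injective (because $\ker\partial=\im p^*=B$ and $Ba\cong B$). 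As written, the proposal trades the paper's external inputs for an unproved polynomial identity of comparable difficulty.
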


The technique that we use to prove this theorem combines the theory of Gr\"obner bases with some classical methods of algebraic topology. In Section \ref{preliminaries} of the paper we establish a framework for the proof of Theorem \ref{mainthm} by recalling some basic facts concerning the cohomology algebras $H^*(G_{n,k};\mathbb Z_2)$ and $H^*(\widetilde G_{n,k};\mathbb Z_2)$, in particular the Gysin sequence connecting these two cohomologies. In Section \ref{2t-1} we use the Gr\"obner basis for the ideal $(g_{2^t-2},g_{2^t-1})\trianglelefteq\mathbb Z_2[w_2,w_3]$ obtained by Fukaya in \cite{Fukaya}, as well as an additive basis for $H^*(\widetilde G_{2^t-1,3};\mathbb Z_2)$ that it produces, to prove part (a) of the theorem. Finally, Sections \ref{2t-2} and \ref{2t-3} prove parts (b) and (c) of the theorem.

\section{Some preliminaries on cohomology of Grassmann manifolds}
\label{preliminaries}

We will be working with $\mathbb Z_2$ coefficients exclusively, so in the rest of the paper, these coefficients for cohomology are understood.

By Borel's description \cite{Borel}, as a graded algebra
\begin{equation}\label{kohomologija_grasmanijana}
H^*(G_{n,k})\cong\mathbb Z_2[w_1,w_2,\ldots,w_k]/I_{n,k},
\end{equation}
where the coset of $w_i$ in the quotient on the right-hand side corresponds to the Stiefel--Whitney class $w_i(\gamma_{n,k})\in H^i(G_{n,k})$ on the left-hand side. So the grading on $\mathbb Z_2[w_1,w_2,\ldots,w_k]$ is such that the degree of $w_i$ is $i$, $1\le i\le k$. $I_{n,k}$ is the graded ideal in $\mathbb Z_2[w_1,w_2,\ldots,w_k]$ generated by polynomials $\overline w_{n-k+1},\ldots,\overline w_n$, obtained as appropriate homogeneous parts of the power series $1/(1+w_1+w_2+\cdots+w_k)$. Therefore, they satisfy the relation
\begin{equation}\label{power_series}
(1+w_1+w_2+\cdots+w_k)(\overline w_0+\overline w_1+\overline w_2+\cdots)=1.    
\end{equation}

Let $p:\widetilde G_{n,k}\rightarrow G_{n,k}$ be the map that forgets the orientation of the $k$-plane in $\mathbb R^n$. It is well known that $p$ is a two-fold (universal) covering map, and that for the induced map $p^*:H^*(G_{n,k})\rightarrow H^*(\widetilde G_{n,k})$ one has $p^*(w_i(\gamma_{n,k}))=w_i(\widetilde\gamma_{n,k})=\widetilde w_i$, $1\le i\le k$. Therefore, $\im p^*$ is a subalgebra of $H^*(\widetilde G_{n,k})$ generated by $\widetilde w_2,\ldots,\widetilde w_k$ ($\widetilde w_1=0$ since $\widetilde G_{n,k}$ is simply connected). The Gysin exact sequence associated to $p$ has the following form:
\[\begin{tikzcd}[column sep = 1em, font = \small, cramped]
\cdots
\arrow{r}
&
H^r(G_{n,k})
\arrow{r}{p^*}
&
H^r(\widetilde G_{n,k})
\arrow{r}
&
H^r(G_{n,k})
\arrow{r}{w_1}
&
H^{r+1}(G_{n,k})
\arrow{r}{p^*}
&
H^{r+1}(\widetilde G_{n,k})
\arrow{r}
&
\cdots,
\end{tikzcd}\]
where $H^r(G_{n,k})\stackrel{w_1}\longrightarrow H^{r+1}(G_{n,k})$ is the multiplication with $w_1(\gamma_{n,k})$.

The inclusion $\mathbb R^n\hookrightarrow\mathbb R^{n+1}$ induces embeddings $i:G_{n,k}\hookrightarrow G_{n+1,k}$ and $j:G_{n,k}\hookrightarrow G_{n+1,k+1}$; and likewise in the oriented case, it induces $\widetilde i:\widetilde G_{n,k}\hookrightarrow\widetilde G_{n+1,k}$ and $\widetilde j:\widetilde G_{n,k}\hookrightarrow\widetilde G_{n+1,k+1}$. Moreover, these embeddings commute with the double covering $p$, so we have the following commutative diagrams of two-fold coverings ($S^0$-bundles).
\[
\begin{tikzcd}
S^0
\arrow[equal]{r}
\arrow[hookrightarrow]{d}
&
S^0
\arrow[hookrightarrow]{d}
&
S^0
\arrow[equal]{r}
\arrow[hookrightarrow]{d}
&
S^0
\arrow[hookrightarrow]{d}
\\
\widetilde G_{n,k}
\arrow[hookrightarrow]{r}{\widetilde i}
\arrow{d}[left]{p}
&
\widetilde G_{n+1,k}
\arrow{d}{p}
&
\widetilde G_{n,k}
\arrow[hookrightarrow]{r}{\widetilde j}
\arrow{d}[left]{p}
&
\widetilde G_{n+1,k+1}
\arrow{d}{p}
\\
G_{n,k}
\arrow[hookrightarrow]{r}{i}
&
G_{n+1,k}
&
G_{n,k}
\arrow[hookrightarrow]{r}{j}
&
G_{n+1,k+1}
\end{tikzcd}
\]
These diagrams, in return, induce morphisms between Gysin sequences.
\begin{equation}\label{gysin1}
\begin{tikzcd}[column sep = 1em, font = \small, cramped]
\cdots
\arrow{r}
&
H^r(G_{n+1,k})
\arrow{r}{p^*}
\arrow{d}{i^*}
&
H^r(\widetilde G_{n+1,k})
\arrow{r}{\varphi}
\arrow{d}{\widetilde i^*}
&
H^r(G_{n+1,k})
\arrow{r}{w_1}
\arrow{d}{i^*}
&
H^{r+1}(G_{n+1,k})
\arrow{r}{}
\arrow{d}{i^*}
&
\cdots
\\
\cdots
\arrow{r}
&
H^r(G_{n,k})
\arrow{r}{p^*}
&
H^r(\widetilde G_{n,k})
\arrow{r}{\varphi}
&
H^r(G_{n,k})
\arrow{r}{w_1}
&
H^{r+1}(G_{n,k})
\arrow{r}
&
\cdots
\end{tikzcd}
\end{equation}
\begin{equation}\label{gysin2}
\begin{tikzcd}[column sep = 1em, font = \small, cramped]
\cdots
H^r(G_{n+1,k+1})
\arrow{r}{p^*}
\arrow{d}{j^*}
&
H^r(\widetilde G_{n+1,k+1})
\arrow{r}{\varphi}
\arrow{d}{\widetilde j^*}
&
H^r(G_{n+1,k+1})
\arrow{r}{w_1}
\arrow{d}{j^*}
&
H^{r+1}(G_{n+1,k+1})
\arrow{r}{}
\arrow{d}{j^*}
&
\cdots
\\
\cdots
H^r(G_{n,k})
\arrow{r}{p^*}
&
H^r(\widetilde G_{n,k})
\arrow{r}{\varphi}
&
H^r(G_{n,k})
\arrow{r}{w_1}
&
H^{r+1}(G_{n,k})
\arrow{r}
&
\cdots
\end{tikzcd}
\end{equation}

The following lemma will be used repeatedly in the upcoming sections.

\begin{lemma}\label{osnovna_lema}
{\rm(a)} Suppose that $r\ge0$ is an integer with the property that in $H^r(G_{n+1,k})$ one has $\ker w_1\cap\ker i^*=0$ (see (\ref{gysin1})). Then for an arbitrary class $x\in H^r(\widetilde G_{n+1,k})$ the following implication holds:
 \[x\notin\im p^* \quad \Longrightarrow \quad \widetilde i^*(x)\notin\im p^*.\]

 {\rm(b)} Similarly, if $r\ge0$ is an integer with the property that $\ker w_1\cap\ker j^*=0$ in $H^r(G_{n+1,k+1})$ (see (\ref{gysin2})), then for a class $x\in H^r(\widetilde G_{n+1,k+1})$ the following implication holds:
 \[x\notin\im p^* \quad \Longrightarrow \quad \widetilde j^*(x)\notin\im p^*.\]
\end{lemma}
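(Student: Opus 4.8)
The key observation is that the Gysin sequence gives exactness at each $H^r(\widetilde G_{n+1,k})$, so $\im p^*$ is precisely $\ker\varphi$. Therefore the hypothesis $x\notin\im p^*$ means $\varphi(x)\neq 0$ in $H^r(G_{n+1,k})$, and the conclusion $\widetilde i^*(x)\notin\im p^*$ is equivalent (using exactness of the \emph{lower} Gysin sequence) to $\varphi(\widetilde i^*(x))\neq 0$ in $H^r(G_{n,k})$. So the whole lemma reduces to showing that the nonzero class $\varphi(x)$ survives the map $i^*$, i.e.\ that $i^*(\varphi(x))\neq 0$.

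\textbf{Next I would exploit the commutativity of diagram (\ref{gysin1}).} The left square involving $\varphi$ commutes, so $\varphi(\widetilde i^*(x))=i^*(\varphi(x))$. Writing $y:=\varphi(x)\in H^r(G_{n+1,k})$, I have $y\neq 0$ by the hypothesis, and I must show $i^*(y)\neq 0$. This is exactly where the additional hypothesis $\ker w_1\cap\ker i^*=0$ in $H^r(G_{n+1,k})$ enters. The class $y$ lies in $\im\varphi$, and by exactness of the \emph{upper} Gysin sequence at the term $H^r(G_{n+1,k})$ (the one mapped by $w_1$), we have $\im\varphi=\ker w_1$. Hence $y\in\ker w_1$. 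Combined with the hypothesis $\ker w_1\cap\ker i^*=0$, the fact that $y\neq 0$ forces $y\notin\ker i^*$, that is $i^*(y)\neq 0$, which is precisely what is needed.

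\textbf{The proof of part (b) is formally identical}, replacing the embedding $i$ by $j$, the sequence (\ref{gysin1}) by (\ref{gysin2}), and the hypothesis $\ker w_1\cap\ker i^*=0$ by $\ker w_1\cap\ker j^*=0$; every step transfers verbatim. I do not anticipate any genuine obstacle here: the argument is a short exactness-plus-commutativity chase and the only subtlety is bookkeeping which square of the Gysin morphism commutes and which exactness relation ($\im p^*=\ker\varphi$ versus $\im\varphi=\ker w_1$) is invoked at each stage. The one point deserving a moment of care is making sure that $\varphi$ lands $x$ into the copy of $H^r(G_{n+1,k})$ that carries the multiplication-by-$w_1$ map, so that $y\in\ker w_1$ is legitimately available; this is immediate from the placement of the maps in (\ref{gysin1}).
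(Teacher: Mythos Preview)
Your proof is correct and follows essentially the same approach as the paper: both arguments use exactness of the Gysin sequence to identify $\im p^*=\ker\varphi$ and $\im\varphi=\ker w_1$, then the commutativity $\varphi\circ\widetilde i^*=i^*\circ\varphi$ together with the hypothesis $\ker w_1\cap\ker i^*=0$ to conclude. The only difference is presentational---you spell out the exactness relations more explicitly---but the logic is identical.
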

\begin{proof}
    The proof of the lemma comes down to elementary diagram chasing. For (a) we use diagram (\ref{gysin1}). If $x\notin\im p^*=\ker\varphi$, then $\varphi(x)\neq0$ and $\varphi(x)\in\ker w_1$. From $\ker w_1\cap\ker i^*=0$ we now conclude that $\varphi(x)\notin\ker i^*$, and so
    \[\varphi\big(\,\,\widetilde i^*(x)\big)=i^*\big(\varphi(x)\big)\neq0,\]
    which means that $\widetilde i^*(x)\notin\ker\varphi=\im p^*$.

    The proof of part (b) is identical (one just uses diagram (\ref{gysin2})).
\end{proof}

Hence, we will be interested in the kernels of the maps $i^*:H^*(G_{n+1,k})\rightarrow H^*(G_{n,k})$ and $j^*:H^*(G_{n+1,k+1})\rightarrow H^*(G_{n,k})$. The total space of canonical vector bundle $\gamma_{n,k}$ over $G_{n,k}$ consists of pairs $(V,v)$, where $V\in G_{n,k}$ and $v\in V$, so it is easy to see that the embeddings $i:G_{n,k}\hookrightarrow G_{n+1,k}$ and $j:G_{n,k}\hookrightarrow G_{n+1,k+1}$ are covered by bundle maps from $\gamma_{n,k}$ to $\gamma_{n+1,k}$ and from $\gamma_{n,k}\oplus\varepsilon^1$ to $\gamma_{n+1,k+1}$ respectively ($\varepsilon^1$ is the trivial line bundle over $G_{n,k}$). Therefore, the next lemma is straightforward from the naturality of the Stiefel--Whitney classes, the equality $w_r(\gamma_{n,k}\oplus\varepsilon^1)=w_r(\gamma_{n,k})$, and the fact that the vector bundle $\gamma_{n,k}$ is $k$-dimensional (see \cite[p.\ 37ff]{MilnorSt}).

\begin{lemma}\label{naturality}
{\rm(a)} For the map $i^*:H^*(G_{n+1,k})\rightarrow H^*(G_{n,k})$ we have
 \[i^*\big(w_r(\gamma_{n+1,k})\big)=w_r(\gamma_{n,k}), \quad 1\le r\le k.\]

 {\rm(b)} For the map $j^*:H^*(G_{n+1,k+1})\rightarrow H^*(G_{n,k})$ we have
 \begin{align*}
  j^*\big(w_r(\gamma_{n+1,k+1})\big)&=w_r(\gamma_{n,k}), \quad 1\le r\le k,\\
  j^*\big(w_{k+1}(\gamma_{n+1,k+1})\big)&=w_{k+1}(\gamma_{n,k})=0.   
 \end{align*}
\end{lemma}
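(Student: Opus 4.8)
The plan is to reduce both parts to the naturality of the Stiefel--Whitney classes, combined in part (b) with the Whitney sum formula, by exploiting the explicit description of the total space of $\gamma_{n,k}$ recalled above (the pairs $(V,v)$ with $v\in V$). Throughout, I write $w(\xi)=1+w_1(\xi)+w_2(\xi)+\cdots$ for the total Stiefel--Whitney class.

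For part (a), I would first identify the pullback bundle $i^*\gamma_{n+1,k}$ over $G_{n,k}$. Since the fibre of $\gamma_{n,k}$ over $V\in G_{n,k}$ is $V$ itself, and $i$ simply regards the $k$-plane $V\subseteq\mathbb R^n$ as a $k$-plane in $\mathbb R^{n+1}$, the assignment $(V,v)\mapsto(V,v)$ is a fibrewise isomorphism $\gamma_{n,k}\to\gamma_{n+1,k}$ covering $i$. Hence $i^*\gamma_{n+1,k}\cong\gamma_{n,k}$, and naturality of the Stiefel--Whitney classes (\cite[p.\ 37]{MilnorSt}) gives $i^*\big(w_r(\gamma_{n+1,k})\big)=w_r(i^*\gamma_{n+1,k})=w_r(\gamma_{n,k})$ for every $r$, in particular for $1\le r\le k$.

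For part (b), the only extra input is to recognise the pullback $j^*\gamma_{n+1,k+1}$. The embedding $j$ sends $V\subseteq\mathbb R^n$ to $V\oplus\mathbb R e_{n+1}\subseteq\mathbb R^{n+1}$, so the fibre of $j^*\gamma_{n+1,k+1}$ over $V$ is $V\oplus\mathbb R e_{n+1}$. This splits naturally as the fibre $V$ of $\gamma_{n,k}$ together with the fixed line $\mathbb R e_{n+1}$, yielding an isomorphism $j^*\gamma_{n+1,k+1}\cong\gamma_{n,k}\oplus\varepsilon^1$, where $\varepsilon^1$ denotes the trivial line bundle. Applying the Whitney sum formula and $w(\varepsilon^1)=1$, I obtain $j^*\big(w(\gamma_{n+1,k+1})\big)=w(\gamma_{n,k})\,w(\varepsilon^1)=w(\gamma_{n,k})$; comparing homogeneous components of degree $r$ gives $j^*\big(w_r(\gamma_{n+1,k+1})\big)=w_r(\gamma_{n,k})$ for all $r$. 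For $1\le r\le k$ this is the first asserted equality, while for $r=k+1$ it reads $j^*\big(w_{k+1}(\gamma_{n+1,k+1})\big)=w_{k+1}(\gamma_{n,k})$, which vanishes because $\gamma_{n,k}$ has rank $k$.

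I expect no serious obstacle: both statements are formal consequences of naturality. The one point meriting care is the identification of the splitting $j^*\gamma_{n+1,k+1}\cong\gamma_{n,k}\oplus\varepsilon^1$ in part (b), since, unlike in part (a), the covering bundle map is only a fibrewise monomorphism (the ranks differ by one); one must check that its cokernel is the trivial line bundle spanned by the added coordinate $e_{n+1}$, which is immediate from the concrete description of the fibres.
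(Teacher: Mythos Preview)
Your proof is correct and follows the same approach as the paper, which simply notes that the lemma is ``straightforward from the naturality of the Stiefel--Whitney classes and the fact that the vector bundle $\gamma_{n,k}$ is $k$-dimensional,'' after observing that $i$ and $j$ are covered by bundle maps from $\gamma_{n,k}$. Your argument is in fact more careful than the paper's in part (b): since the bundle map $\gamma_{n,k}\to\gamma_{n+1,k+1}$ is only a fibrewise monomorphism (the ranks differ), the naturality axiom in \cite[p.\ 37]{MilnorSt} does not apply directly, and your identification $j^*\gamma_{n+1,k+1}\cong\gamma_{n,k}\oplus\varepsilon^1$ together with the Whitney sum formula is precisely what is needed to make the step rigorous.
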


The same arguments work in the oriented case as well, and so for embeddings $\widetilde i:\widetilde G_{n,k}\hookrightarrow\widetilde G_{n+1,k}$ and $\widetilde j:\widetilde G_{n,k}\hookrightarrow\widetilde G_{n+1,k+1}$ one has
\begin{equation}\label{naturalityi}
 \widetilde i^*(\widetilde w_r)=\widetilde w_r, \quad 2\le r\le k,  
\end{equation}
and
\begin{equation}\label{naturalityj}
 \widetilde j^*(\widetilde w_r)=\widetilde w_r, \quad 2\le r\le k; \qquad  \widetilde j^*(\widetilde w_{k+1})=0.
\end{equation}

\medskip

In the case $k=3$, equality (\ref{power_series}) leads to the recurrence relation
\begin{equation}\label{w_rekurzivno}
    \overline w_{r+3} = w_1\overline w_{r+2} + w_2\overline w_{r+1} + w_3\overline w_r,\quad r\ge 0,
\end{equation}
and also an explicit formula for calculating $\overline w_r$. Specifically,
\begin{equation}\label{overline_w}
    \overline w_r = \sum_{a+2b+3c = r} \binom{a+b+c}{a}\binom{b+c}{b} w_1^aw_2^bw_3^c,\quad r\ge 0,
\end{equation}
where the sum is taken over all triples $(a,b,c)$ of nonnegative integers such that $a+2b+3c = r$ (both binomial coefficients in the sum are considered modulo $2$).

It is well known (see \cite{Fukaya} or \cite{CP}) that in the case $k=3$
\begin{equation}\label{isomorphism_imp*}
\im p^*\cong\mathbb Z_2[w_2,w_3]/J_{n,3},
\end{equation}
where $J_{n,3}$ is the graded ideal in $\mathbb Z_2[w_2,w_3]$ generated by the polynomials $g_{n-2}$, $g_{n-1}$ and $g_n$. These polynomials $g_r\in\mathbb Z_2[w_2,w_3]$, $r\ge0$, are modulo $w_1$ reductions of the polynomials $\overline w_r\in\mathbb Z_2[w_1,w_2,w_3]$. From (\ref{overline_w}) we now get
\begin{equation}\label{gpolk3}
g_r=\sum_{2b+3c=r}{b+c\choose b}\,w_2^bw_3^c, \quad r\ge0,
\end{equation}
and (\ref{w_rekurzivno}) becomes
\begin{equation}\label{recgpolk3}
g_{r+3}=w_2g_{r+1}+w_3g_r, \quad r\ge0.
\end{equation}
By definition, $J_{n,3}=(g_{n-2},g_{n-1},g_n)$, but from (\ref{recgpolk3}) we see that actually
\begin{equation}\label{r>n-2}
g_r\in J_{n,3} \quad \mbox{ for all } r\ge n-2.
\end{equation}

For a (homogeneous) polynomial $f\in\mathbb Z_2[w_2,w_3]$, we shall denote by $\widetilde f\in\im p^*\subset H^*(\widetilde G_{n,3})$ the cohomology class corresponding to (the coset of) $f$ via the isomorphism (\ref{isomorphism_imp*}). This is in accordance with the notation $\widetilde w_i$, that we use for the Stiefel--Whitney class $w_i(\widetilde\gamma_{n,3})$, $i=2,3$. For instance, the relation (\ref{r>n-2}) can now be rewritten as
\begin{equation}\label{r>=n-2}
\widetilde g_r=0 \quad \mbox{ in } H^*(\widetilde G_{n,3}),  \quad \mbox{ for all } r\ge n-2.
\end{equation}

\section{The case $n=2^t-1$}
\label{2t-1}

Let $t\ge3$ be a fixed integer. By \cite[Theorem A(b)]{BasuChakraborty}, in $H^*(\widetilde G_{2^t-1,3})$, besides $\widetilde w_2$ and $\widetilde w_3$, there is one more indecomposable class $\widetilde a_{2^t-4}$ in cohomological dimension $2^t-4$. \em Indecomposable \em means that it cannot be expressed as a polynomial in classes of (strictly) smaller dimension. Since all classes of smaller dimension are polynomials in $\widetilde w_2$ and $\widetilde w_3$, and the subalgebra $\im p^*$ is generated by these two Stiefel--Whitney classes, this is equivalent to the statement $\widetilde a_{2^t-4}\notin\im p^*$. The uniqueness of $\widetilde a_{2^t-4}$ is only up to addition of an element of $\im p^*$. Put in other words, if $\widetilde a_{2^t-4}$ is an indecomposable class in $H^{2^t-4}(\widetilde G_{2^t-1,3})$, then so is $\widetilde a_{2^t-4}+\widetilde w$ for any $\widetilde w\in H^{2^t-4}(\widetilde G_{2^t-1,3})\cap\im p^*$. This means that for $\widetilde a_{2^t-4}$ we can choose any class from $H^{2^t-4}(\widetilde G_{2^t-1,3})\setminus\im p^*$.

We are going to make this choice in the following way. By \cite[Theorem A(a)]{BasuChakraborty} we have an indecomposable class (outside $\im p^*$) $\widetilde a_{2^t-1}\in H^{2^t-1}(\widetilde G_{2^t,3})$. We will prove that $\widetilde i^*(\widetilde a_{2^t-1})\in H^{2^t-1}(\widetilde G_{2^t-1,3})\setminus\im p^*$, where $\widetilde i:\widetilde G_{2^t-1,3}\hookrightarrow\widetilde G_{2^t,3}$ is the embedding from the previous section. Since every class in $H^*(\widetilde G_{2^t-1,3})$ is a polynomial in $\widetilde w_2$, $\widetilde w_3$ and some (indecomposable) class in $H^{2^t-4}(\widetilde G_{2^t-1,3})$ (by \cite[Theorem A(b)]{BasuChakraborty}), we see that an odd-dimensional class must be divisible by $\widetilde w_3$. Hence, $\widetilde i^*(\widetilde a_{2^t-1})$ must be of the form $\widetilde w_3\sigma$, for some $\sigma\in H^{2^t-4}(\widetilde G_{2^t-1,3})$, and if we show $\widetilde i^*(\widetilde a_{2^t-1})\notin\im p^*$, we will have $\sigma\notin\im p^*$. Then we will be able to take $\widetilde a_{2^t-4}:=\sigma$ and thus ensure the property:
\begin{equation}\label{i*2^t-->2^t-1}
\widetilde i^*(\widetilde a_{2^t-1})=\widetilde w_3\widetilde a_{2^t-4}.
\end{equation}

Therefore, we are left to prove that $\widetilde i^*(\widetilde a_{2^t-1})\notin\im p^*$. This will be an immediate consequence of Lemma \ref{osnovna_lema}(a) (for $r=n=2^t-1$ and $k=3$) as soon as we establish the following proposition.

\begin{proposition}\label{kercapker2} Let $H^{2^t-1}(G_{2^t,3})\stackrel{w_1}\longrightarrow H^{2^t}(G_{2^t,3})$ be the multiplication with the Stiefel--Whitney class $w_1(\gamma_{2^t,3})$ and $i^*:H^{2^t-1}(G_{2^t,3})\rightarrow H^{2^t-1}(G_{2^t-1,3})$ the map induced by the embedding $i:G_{2^t-1,3}\hookrightarrow G_{2^t,3}$. Then
    $\ker w_1\cap \ker i^* = 0$.
\end{proposition}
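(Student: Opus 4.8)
The plan is to translate the statement into finite-dimensional linear algebra inside the Borel presentation (\ref{kohomologija_grasmanijana}). By Lemma \ref{naturality}(a) the map $i^*$ sends $w_r(\gamma_{2^t,3})$ to $w_r(\gamma_{2^t-1,3})$ for $r=1,2,3$, so under (\ref{kohomologija_grasmanijana}) it is simply the projection $\mathbb Z_2[w_1,w_2,w_3]/I_{2^t,3}\to\mathbb Z_2[w_1,w_2,w_3]/I_{2^t-1,3}$ induced by the identity on the polynomial ring; this is well defined because the recurrence (\ref{w_rekurzivno}) gives $\overline w_r\in I_{2^t-1,3}$ for all $r\ge 2^t-3$, whence $I_{2^t,3}\subseteq I_{2^t-1,3}$. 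Consequently $\ker i^*=I_{2^t-1,3}/I_{2^t,3}$. In degree $2^t-1$ the ideal $I_{2^t-1,3}=(\overline w_{2^t-3},\overline w_{2^t-2},\overline w_{2^t-1})$ is spanned by $w_1^2\overline w_{2^t-3}$, $w_2\overline w_{2^t-3}$, $w_1\overline w_{2^t-2}$ and $\overline w_{2^t-1}$; since $\overline w_{2^t-2},\overline w_{2^t-1}\in I_{2^t,3}$, the last two vanish in the quotient, so $\ker i^*$ in this degree is spanned by the cosets of $w_1^2\overline w_{2^t-3}$ and $w_2\overline w_{2^t-3}$. Thus everything reduces to understanding how multiplication by $w_1$ acts on these two classes.

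To carry out that computation I would use the classical Schubert-class (Schur polynomial) description of $H^*(G_{2^t,3})$. Writing $w_i=e_i(x_1,x_2,x_3)$ for the Stiefel--Whitney roots, the defining identity (\ref{power_series}) gives $\overline w_r=h_r(x_1,x_2,x_3)$ (the complete homogeneous symmetric polynomial), and $H^*(G_{2^t,3})$ has the additive basis $\{s_\lambda\}$ of Schur polynomials indexed by partitions $\lambda$ fitting in the $3\times(2^t-3)$ rectangle, with the convention that $s_\lambda=0$ once $\lambda$ leaves this box. In particular $\overline w_{2^t-3}=s_{(2^t-3)}$, $w_1=e_1=s_{(1)}$ and $w_2=e_2=s_{(1,1)}$, and products are computed by Pieri's rule (adding vertical strips), whose structure constants are $0$ or $1$ and hence survive reduction mod $2$. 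Setting $m:=2^t-3$, a short Pieri computation gives, in $H^*(G_{2^t,3})$, $w_1^2\overline w_m=s_{(m,2)}+s_{(m,1,1)}$ and $w_2\overline w_m=s_{(m,1,1)}$ (the shapes with first part $m+1$, or with four rows, fall outside the box and vanish).

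It then remains to apply $w_1$ once more. Another application of Pieri's rule yields $w_1\cdot w_1^2\overline w_m=s_{(m,3)}$ and $w_1\cdot w_2\overline w_m=s_{(m,2,1)}$ (in the first product the two copies of $s_{(m,2,1)}$ cancel mod $2$). Hence for $\alpha,\beta\in\mathbb Z_2$ an arbitrary element $x=\alpha\,w_1^2\overline w_m+\beta\,w_2\overline w_m$ of $\ker i^*$ satisfies $w_1x=\alpha\,s_{(m,3)}+\beta\,s_{(m,2,1)}$. Since $t\ge3$ forces $m=2^t-3\ge5$, both $(m,3)$ and $(m,2,1)$ fit in the $3\times m$ rectangle, so $s_{(m,3)}$ and $s_{(m,2,1)}$ are distinct, linearly independent basis elements. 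Therefore $w_1x=0$ forces $\alpha=\beta=0$, i.e.\ $x=0$; equivalently, multiplication by $w_1$ is injective on $\ker i^*$ in degree $2^t-1$, which is exactly $\ker w_1\cap\ker i^*=0$.

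I expect the main obstacle to be purely bookkeeping: making sure the Schur expansions are reduced correctly against the rectangle (equivalently, that the ``boundary'' relations $\overline w_{2^t-2}=\overline w_{2^t-1}=\overline w_{2^t}=0$ really do annihilate the unwanted shapes), and confirming that the two surviving classes are genuinely independent rather than accidentally equal or zero. If one prefers to avoid symmetric-function machinery, the same two facts can be extracted directly from the recurrence (\ref{w_rekurzivno}) together with those vanishing relations (which already give, for instance, $w_3\overline w_{2^t-3}=0$ in $H^*(G_{2^t,3})$), at the cost of a more delicate analysis of mod-$2$ binomial coefficients via Lucas' theorem near the power $2^t$.
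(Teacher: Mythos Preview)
Your argument is correct, and it takes a genuinely different route from the paper's proof.

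Both proofs begin the same way: an element of $\ker i^*$ in degree $2^t-1$ can be represented by $\alpha\,w_1^2\overline w_{2^t-3}+\beta\,w_2\overline w_{2^t-3}$ (the contributions $w_1\overline w_{2^t-2}$ and $\overline w_{2^t-1}$ already lie in $I_{2^t,3}$). The paper then multiplies by $w_1$, writes the result as a $\mathbb Z_2$-combination of the generators of $I_{2^t,3}$ in degree $2^t$, applies the recurrence (\ref{w_rekurzivno}), and tracks the coefficients of three carefully chosen monomials via the explicit formula (\ref{overline_w}) to force $\alpha=\beta=0$. Your approach instead passes to the Schubert basis: with $m=2^t-3$ you identify $\overline w_m=s_{(m)}$, $w_1=s_{(1)}$, $w_2=s_{(1,1)}$, and a short Pieri computation in the $3\times m$ rectangle gives $w_1\cdot(w_1^2\overline w_m)=s_{(m,3)}$ and $w_1\cdot(w_2\overline w_m)=s_{(m,2,1)}$, two distinct nonzero Schubert classes (both fit since $m\ge5$), whence $\alpha=\beta=0$ immediately.

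What each approach buys: the paper's argument stays entirely within the polynomial presentation and needs no symmetric-function machinery, at the price of a somewhat delicate monomial-by-monomial analysis of mod~$2$ binomial coefficients. Your Schubert-calculus argument is shorter and more structural---once one accepts the $s_\lambda$ basis and Pieri's rule (whose coefficients are $0$ or $1$, hence stable mod~$2$), the independence is read off directly---but it imports that extra background. Your closing remark that the same conclusion can be recovered from (\ref{w_rekurzivno}) together with the vanishing of $\overline w_{2^t-2},\overline w_{2^t-1},\overline w_{2^t}$ is exactly the spirit of the paper's proof.
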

\begin{proof}
    For a homogeneous polynomial $f\in\mathbb Z_2[w_1,w_2,w_3]$ we will denote by $[f]$ the cohomology class corresponding to $f$ via the isomorphism (\ref{kohomologija_grasmanijana}). If $[f]\in\ker w_1\cap \ker i^*$, then by Lemma \ref{naturality}(a) we first have $[f] = i^*[f] = 0$ in $H^{2^t-1}(G_{2^t-1,3})$, so 
    \[f = (\alpha w_1^2 + \beta w_2)\overline w_{2^t-3} + \gamma w_1\overline w_{2^t-2} + \delta \overline w_{2^t-1},\]
    for some $\alpha,\beta,\gamma,\delta\in\mathbb Z_2$. Since $\overline w_{2^t-2}, \overline w_{2^t-1}\in J_{2^t,3}$,
    \[[f] = [(\alpha w_1^2 + \beta w_2)\overline w_{2^t-3} + \gamma w_1\overline w_{2^t-2} + \delta \overline w_{2^t-1}] = [(\alpha w_1^2 + \beta w_2)\overline w_{2^t-3}],\]
    so we can choose $\gamma = \delta = 0$. 

    On the other hand, since $[f]\in \ker w_1$, we have $[w_1f] = 0$ in $H^{2^t}(G_{2^t,3})$, so
    \begin{equation}\label{w1f}
        w_1f = (\lambda w_1^2 + \mu w_2) \overline w_{2^t-2} + \nu w_1 \overline w_{2^t-1} + \varepsilon \overline w_{2^t},
    \end{equation}
    for some $\lambda, \mu, \nu, \varepsilon\in\mathbb Z_2$. By (\ref{w_rekurzivno}),
    \[\overline w_{2^t} = w_1\overline w_{2^t-1} + w_2\overline w_{2^t-2} + w_3\overline w_{2^t-3},\]
    and after substituting $(\alpha w_1^2 + \beta w_2)\overline w_{2^t-3}$ for $f$, (\ref{w1f}) becomes
    \begin{equation} \label{w1f1}
        (\alpha w_1^3 + \beta w_1 w_2 + \varepsilon w_3)\overline w_{2^t-3} = (\lambda w_1^2 + (\mu+ \varepsilon) w_2) \overline w_{2^t-2} + (\nu + \varepsilon)w_1\overline w_{2^{t-1}}.
    \end{equation}
By (\ref{overline_w}), the polynomial $\overline w_{2^t-2}$ contains the monomial $w_2^{2^{t-1}-1}$ with nonzero coefficient, and it is obvious that $(\mu+ \varepsilon) w_2\overline w_{2^t-2}$ is the only summand in (\ref{w1f1}) that can contain $w_2^{2^{t-1}}$. So we conclude $\mu+ \varepsilon=0$, i.e., $\mu = \varepsilon$. Equation (\ref{w1f1}) now becomes
    \begin{equation}\label{w1f2}
        (\alpha w_1^3 + \beta w_1 w_2 + \mu w_3)\overline w_{2^t-3} = \lambda w_1^2 \overline w_{2^t-2} + (\nu + \mu)w_1\overline w_{2^{t-1}}.
    \end{equation}
We now use the same idea to show that $\alpha =\beta = 0$. For $\alpha$ we observe the monomial $w_1^4w_2^{2^{t-1}-2}$, and use (\ref{overline_w}) in order to determine the coefficients of this monomial in each of the summands in (\ref{w1f2}). These coefficients are listed in Table \ref{tab:alpha}.
\begin{table}[h]
\renewcommand{\arraystretch}{1.5} %
\begin{tabular}{|c|c|c|c|c|}
 \hline
summand & $a$ & $b$ & $c$ &  $\binom{a+b+c}{a}\binom{b+c}{b}$ \\ [1ex]
 \hline
$\alpha w_1^3\overline w_{2^t-3}$ & 1 & $2^{t-1}-2$ & 0 &   1\\ 
$\beta w_1w_2\overline w_{2^t-3}$ & 3 & $2^{t-1}-3$ & 0 &   0\\
$\mu w_3\overline w_{2^t-3}$ & - & - & - &   -\\
$\lambda w_1^2 \overline w_{2^t-2}$ & 2 & $2^{t-1}-2$ & 0 &   0\\
$(\nu + \mu)w_1\overline w_{2^{t-1}}$ & 3 & $2^{t-1}-2$ & 0 &   0\\
 \hline
  \end{tabular}
\caption{\label{tab:alpha} Coefficients of $w_1^4w_2^{2^{t-1}-2}$ in (\ref{w1f2})}
\end{table}
So $\alpha w_1^3\overline w_{2^t-3}$ is the only summand containing $w_1^4w_2^{2^{t-1}-2}$, and its coefficient is $\alpha$. We conclude that $\alpha =0$.

For $\beta$ we proceed in the same way. We now observe the monomial $w_1^3w_2^{2^{t-1}-3}w_3$, and list its coefficients in Table \ref{tab:beta}, which allows us to conclude that $\beta =0$.
\begin{table}[h]
\renewcommand{\arraystretch}{1.5} %
\begin{tabular}{|c|c|c|c|c|}
 \hline
summand & $a$ & $b$ & $c$ &  $\binom{a+b+c}{a}\binom{b+c}{b}$ \\ [1ex]
 \hline
$\alpha w_1^3\overline w_{2^t-3}$ & 0 & $2^{t-1}-3$ & 1 &   0\\ 
$\beta w_1w_2\overline w_{2^t-3}$ & 2 & $2^{t-1}-4$ & 1 &   1\\
$\mu w_3\overline w_{2^t-3}$ & 3 & $2^{t-1}-3$ & 0 &   0\\
$\lambda w_1^2 \overline w_{2^t-2}$ & 1 & $2^{t-1}-3$ & 1 &   0\\
$(\nu + \mu)w_1\overline w_{2^{t-1}}$ & 2 & $2^{t-1}-3$ & 1 &   0\\
 \hline
  \end{tabular}
\caption{\label{tab:beta} Coefficients of $w_1^3w_2^{2^{t-1}-3}w_3$ in (\ref{w1f2})}
\end{table}

 Finally, we have $[f] = [(\alpha w_1^2 + \beta w_2)\overline w_{2^t-3}] = 0$ in $H^{2^t-1}(G_{2^t,3})$, and since $[f]$ was an arbitrary element of $\ker w_1\cap \ker i^*$, this concludes the proof.
\end{proof}

We have thus chosen indecomposable classes $\widetilde a_{2^t-1}\in H^{2^t-1}(\widetilde G_{2^t,3})$ and $\widetilde a_{2^t-4}\in H^{2^t-4}(\widetilde G_{2^t-1,3})$ such that (\ref{i*2^t-->2^t-1}) holds.

It is well known (see \cite[Lemma 2.1(i)]{BasuChakraborty}) that $g_{2^t-3}=0$, so we get $J_{2^t-1,3}=(g_{2^t-3},g_{2^t-2},g_{2^t-1})=(g_{2^t-2},g_{2^t-1})$. Also, according to \cite[Theorem B(2)]{BasuChakraborty} we have an isomorphism of graded algebras
\begin{equation}\label{isomorphism2^t-1}
 H^*(\widetilde G_{2^t-1,3})\cong\frac{\mathbb Z_2[w_2,w_3,a_{2^t-4}]}{\big(g_{2^t-2},g_{2^t-1},a_{2^t-4}^2-P_1a_{2^t-4}-P_2\big)},  
 \end{equation}
 mapping the classes $\widetilde w_2$, $\widetilde w_3$ and $\widetilde a_{2^t-4}$ to the cosets of $w_2$, $w_3$ and $a_{2^t-4}$ respectively, where the (unknown) polynomials $P_1$ and $P_2$ do not contain the variable $a_{2^t-4}$. 

In \cite{Fukaya} Fukaya found a Gr\"obner basis for the ideal $J_{2^t-1,3}$, and in \cite{CP} it was shown that $J_{2^t-1,3}=J_{2^t,3}$ and that this Gr\"obner basis consists of polynomials
\begin{equation}\label{grebnerova_baza}
    f_i=g_{2^t-3+2^i}=\sum_{2b+3c=2^t-3+2^i}{b+c\choose b}w_2^bw_3^c,\quad \mbox{ for } 0\le i\le t-1.
    \end{equation}
These polynomials, of course, belong to $J_{2^t-1,3}$ (see (\ref{r>n-2})), and so we have that $\widetilde f_i=0$ in $H^*(\widetilde G_{2^t-1,3})$ for all $i$. By \cite[p.\ 203]{Fukaya} (or \cite[Proposition 3.4]{CP}) we know that the leading monomial (with respect to the lexicographic order with $w_2>w_3$) of $f_i$ is the following:
\begin{equation}\label{LMfi}
\mathrm{LM}(f_i)=w_2^{2^{t-1}-2^i}w_3^{2^i-1}.
\end{equation}
Moreover, \cite[Proposition 3.4]{CP} gives us that
\begin{equation}\label{ft-1}
f_{t-1}=\mathrm{LM}(f_{t-1})=w_3^{2^{t-1}-1}.
\end{equation}

In the same way as in \cite{CP} we can use this Gr\"obner basis and the isomorphism (\ref{isomorphism2^t-1}) to find an additive basis for the cohomology algebra $H^*(\widetilde G_{2^t-1,3})$. First, if we choose the lexicographic order with $a_{2^t-4}>w_2>w_3$ for the monomial order in $\mathbb Z_2[w_2,w_3,a_{2^t-4}]$, then the set $\{f_0,f_1,\ldots,f_{t-1},a_{2^t-4}^2-P_1a_{2^t-4}-P_2\}$ turns out to be a Gr\"obner basis for the ideal $(g_{2^t-2},g_{2^t-1},a_{2^t-4}^2-P_1a_{2^t-4}-P_2)$ in $\mathbb Z_2[w_2,w_3,a_{2^t-4}]$ (whatever the polynomials $P_1$ and $P_2$ might be). This is essentially due to the fact that the leading monomial of the polynomial $a_{2^t-4}^2-P_1a_{2^t-4}-P_2$ is $a_{2^t-4}^2$ (since $P_1$ and $P_2$ are polynomials in $w_2$ and $w_3$ only), and $a_{2^t-4}^2$ is coprime with every $\mathrm{LM}(f_i)$, $0\le i\le t-1$ (cf.\ \cite[Theorem 3.6]{CP}). On the other hand, if we observe the monomials not divisible by any of the leading monomials from Gr\"obner basis, it is well known that their cosets form an additive basis for the quotient. Therefore, the isomorphism (\ref{isomorphism2^t-1}) leads to the conclusion that the set
\begin{equation}\label{aditivna_baza}
    B = \left\{\widetilde a_{2^t-4}^r \widetilde w_2^b \widetilde w_3^c \mid r<2,\, \left(\forall i\in\{0,1,\ldots,t-1\}\right)\, b<2^{t-1}-2^i \vee c<2^i-1\right\}\end{equation}
is an additive basis for $H^*(\widetilde G_{2^t-1,3})$ (cf.\ \cite[Corollary 3.6.1]{CP}).

The elements of $B$ with $r=0$, i.e., the classes $\widetilde w_2^b\widetilde w_3^c$ such that for all $i\in\{0,1,\ldots,t-1\}$ either $b<2^{t-1}-2^i$ or $c<2^i-1$, form an additive basis for $\im p^*$ (this could be obtained from the isomorphism (\ref{isomorphism_imp*}) by an analogous consideration). Now it is not hard to see that the basis element from $\im p^*$ of the highest cohomological dimension is $\widetilde w_2^{2^{t-2}-1}\widetilde w_3^{2^{t-1}-2}\in H^{2^{t+1}-8}(\widetilde G_{2^t-1,3})$. Indeed, for $\widetilde w_2^b\widetilde w_3^c\in B$ one must have $c\le2^{t-1}-2$ (since $c\ge2^{t-1}-1$ would imply $b<0$), and if $i\in\{0,1,\ldots,t-2\}$ is such that $2^i-1\le c\le2^{i+1}-2$, then $b\le2^{t-1}-2^i-1$. So
\begin{equation}\label{ineq}
2b+3c\le2(2^{t-1}-2^i-1)+3(2^{i+1}-2)=2^t+2^{i+2}-8\le2^{t+1}-8.
\end{equation}
This means that the following implication holds
\begin{equation}\label{propimp*}
 j>2^{t+1}-8 \quad \Longrightarrow \quad H^j(\widetilde G_{2^t-1,3})\cap\im p^*=0.  
\end{equation}

Note also that $\widetilde w_2^{2^{t-2}-1}\widetilde w_3^{2^{t-1}-2}$ is the \em only \em element in $B\cap\im p^*$ of the cohomological dimension $2^{t+1}-8$. Namely, in order to have equality in (\ref{ineq}) one must have $b=2^{t-1}-2^i-1$, $c=2^{i+1}-2$ and $i=t-2$. On the other hand, by \cite[Corollary 4.12]{Fukaya} we know that $\widetilde w_2^{2^t-4}\neq0$, and so we conclude
\begin{equation}\label{visina_w_2}
 \widetilde w_2^{2^t-4}=\widetilde w_2^{2^{t-2}-1}\widetilde w_3^{2^{t-1}-2}\neq0 \quad \mbox{ in } H^{2^{t+1}-8}(\widetilde G_{2^t-1,3}).  
\end{equation}

Similarly, the only element of $B$ in the top dimension $3\cdot2^t-12$ (this is the dimension of the manifold $\widetilde G_{2^t-1,3}$) is $\widetilde a_{2^t-4}\widetilde w_2^{2^{t-2}-1}\widetilde w_3^{2^{t-1}-2}$, so we have
\begin{equation}\label{topdim}
 \widetilde a_{2^t-4}\widetilde w_2^{2^t-4}=\widetilde a_{2^t-4}\widetilde w_2^{2^{t-2}-1}\widetilde w_3^{2^{t-1}-2}\neq0 \quad \mbox{ in } H^{3\cdot2^t-12}(\widetilde G_{2^t-1,3})\cong\mathbb Z_2.  
\end{equation}

\medskip

Theorem \ref{mainthm}(a) states that in (\ref{isomorphism2^t-1}) we can take $P_1=g_{2^t-4}$ and $P_2=\gamma w_2^{2^t-4}$ for some $\gamma\in\mathbb Z_2$. In order to prove this it suffices to show that in $H^*(\widetilde G_{2^t-1,3})$ we have
\begin{equation}\label{eq}
\widetilde a_{2^t-4}^2=\widetilde g_{2^t-4}\widetilde a_{2^t-4}+\gamma\widetilde w_2^{2^t-4}
\end{equation}
(for some $\gamma\in\mathbb Z_2$).

We start off by representing $\widetilde a_{2^t-4}^2$ in the basis $B$ (see (\ref{aditivna_baza})). Since $\widetilde a_{2^t-4}^2\in H^{2^{t+1}-8}(\widetilde G_{2^t-1,3})$, we need to list basis elements in this cohomological dimension. As we have already noticed, from $\im p^*$ we have $\widetilde w_2^{2^{t-2}-1}\widetilde w_3^{2^{t-1}-2}$, and outside of $\im p^*$ we have $\widetilde a_{2^t-4} \widetilde w_2^{2^{t-1}-2-3k}\widetilde w_3^{2k}$ for every $k\ge0$ such that $2^{t-1}-2-3k\ge0$, i.e., $k\le(2^{t-1}-2)/3$. Indeed, $\widetilde a_{2^t-4} \widetilde w_2^{2^{t-1}-2-3k}\widetilde w_3^{2k}\in B$ for all such $k$, since $2^{t-1}-2-3k\ge2^{t-1}-2^i$ and $2k\ge2^i-1$ for some $i\in\{0,1,\ldots,t-1\}$ would imply $2^i\ge3k+2$ and $2^i\le2k+1$, which is obviously impossible. Having in mind (\ref{visina_w_2}), we thus have
\begin{equation}\label{a^2_preko_baznih}
    \widetilde a_{2^t-4}^2 = \gamma\widetilde w_2^{2^t-4} + \sum_{k=0}^{\left\lfloor \frac{2^{t-1}-2}{3} \right\rfloor} \lambda_k \widetilde a_{2^t-4} \widetilde w_2^{2^{t-1}-2-3k}\widetilde w_3^{2k},
\end{equation}
for some uniquely determined $\gamma,\lambda_0,\lambda_1,\ldots\in\mathbb Z_2$. In order to obtain (\ref{eq}), we see that we need to equate the sum on the right-hand side with $\widetilde g_{2^t-4}\widetilde a_{2^t-4}$. As the first step in that direction, we prove the following proposition.

\begin{proposition}\label{lambde}
If $\lambda_k$, $0\le k\le\lfloor(2^{t-1}-2)/3\rfloor$, are the coefficients from (\ref{a^2_preko_baznih}), then
\[\sum_{k=0}^{\left\lfloor \frac{2^{t-1}-2}{3} \right\rfloor} \lambda_k \widetilde a_{2^t-4} \widetilde w_2^{2^{t-1}-2-3k}\widetilde w_3^{2k}=\lambda_0\widetilde g_{2^t-4}\widetilde a_{2^t-4}.\]
\end{proposition}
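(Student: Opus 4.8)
The plan is to determine the coefficients $\lambda_k$ by multiplying the basis expansion (\ref{a^2_preko_baznih}) by $\widetilde w_3$ and using the vanishing relations available in $H^*(\widetilde G_{2^t-1,3})$. First I would record the polynomial identity $g_{2^t-1}=w_3\,g_{2^t-4}$, obtained from the recurrence (\ref{recgpolk3}) with $r=2^t-4$ together with $g_{2^t-3}=0$. Writing $g_{2^t-4}=\sum_{k\ge0}c_k\,w_2^{2^{t-1}-2-3k}w_3^{2k}$ (so $c_0=1$), this gives $g_{2^t-1}=\sum_{k\ge0}c_k\,w_2^{2^{t-1}-2-3k}w_3^{2k+1}$, whose leading monomial is $\mathrm{LM}(g_{2^t-1})=w_2^{2^{t-1}-2}w_3=\mathrm{LM}(f_1)$ by (\ref{LMfi}). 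Since $\widetilde g_{2^t-1}=0$ by (\ref{r>=n-2}), this yields in cohomology the reduction $\widetilde w_2^{2^{t-1}-2}\widetilde w_3=\sum_{k\ge1}c_k\,\widetilde w_2^{2^{t-1}-2-3k}\widetilde w_3^{2k+1}$, where a short check using (\ref{LMfi}) shows that every monomial on the right is already in normal form.

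Then I would multiply (\ref{a^2_preko_baznih}) by $\widetilde w_3$. The term $\gamma\widetilde w_2^{2^t-4}\widetilde w_3$ lies in $\im p^*$ in dimension $2^{t+1}-5>2^{t+1}-8$, so it vanishes by (\ref{propimp*}). On the right, each summand with $k\ge1$ is $\lambda_k\,\widetilde a_{2^t-4}\widetilde w_2^{2^{t-1}-2-3k}\widetilde w_3^{2k+1}$, already a basis element, while the $k=0$ summand $\lambda_0\,\widetilde a_{2^t-4}\widetilde w_2^{2^{t-1}-2}\widetilde w_3$ is rewritten, via the reduction above, as $\lambda_0\sum_{k\ge1}c_k\,\widetilde a_{2^t-4}\widetilde w_2^{2^{t-1}-2-3k}\widetilde w_3^{2k+1}$. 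Granting the key relation $\widetilde w_3\widetilde a_{2^t-4}^2=0$ (addressed below), the left side is $0$, and comparing coefficients of the linearly independent basis elements $\widetilde a_{2^t-4}\widetilde w_2^{2^{t-1}-2-3k}\widetilde w_3^{2k+1}$, $k\ge1$, forces $\lambda_k=\lambda_0 c_k$. With the trivial $\lambda_0=\lambda_0 c_0$, this is exactly $\sum_k\lambda_k\,\widetilde a_{2^t-4}\widetilde w_2^{2^{t-1}-2-3k}\widetilde w_3^{2k}=\lambda_0\,\widetilde a_{2^t-4}\widetilde g_{2^t-4}$, the assertion of the proposition.

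The main obstacle is the vanishing $\widetilde w_3\widetilde a_{2^t-4}^2=0$: without it the computation above merely reproduces the basis expansion of $\widetilde w_3\widetilde a_{2^t-4}^2$ and constrains nothing. I would obtain it from the compatibility (\ref{i*2^t-->2^t-1}), which gives $\widetilde w_3^2\widetilde a_{2^t-4}^2=\big(\widetilde i^*(\widetilde a_{2^t-1})\big)^2=\widetilde i^*(\widetilde a_{2^t-1}^2)$. Writing $\widetilde a_{2^t-1}^2=Q_1\widetilde a_{2^t-1}+Q_2$ with $Q_1,Q_2$ polynomials in $\widetilde w_2,\widetilde w_3$ (using the completed description of $H^*(\widetilde G_{2^t,3})$ from \cite{CP}), the part $\widetilde i^*(Q_2)$ dies for dimensional reasons by (\ref{propimp*}), while $\widetilde i^*(Q_1\widetilde a_{2^t-1})=\widetilde{Q_1}\,\widetilde w_3\widetilde a_{2^t-4}$ vanishes because $\widetilde{Q_1}=0$ in $H^*(\widetilde G_{2^t-1,3})$ by (\ref{r>=n-2}); hence $\widetilde w_3^2\widetilde a_{2^t-4}^2=0$. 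One can then either run the argument of the previous paragraph with $\widetilde w_3^2$ in place of $\widetilde w_3$ (the $k=0$ monomial now reducing through $w_2^{2^{t-1}-2}w_3^2=w_3\cdot\mathrm{LM}(g_{2^t-1})$ to the same relations $\lambda_k=\lambda_0 c_k$), or upgrade to $\widetilde w_3\widetilde a_{2^t-4}^2=0$ by Poincaré duality, since every class in the complementary dimension $2^t-7$ lies in $\im p^*$ (being below dimension $2^t-4$) and, being odd-dimensional, is divisible by $\widetilde w_3$, so its pairing with $\widetilde w_3\widetilde a_{2^t-4}^2$ reduces to a pairing with $\widetilde w_3^2\widetilde a_{2^t-4}^2=0$. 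The recurring delicate point is the normal-form bookkeeping ensuring that only the $k=0$ monomial reduces, which is precisely what produces the proportionality to the coefficients $c_k$ of $g_{2^t-4}$.
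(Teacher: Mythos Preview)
Your approach is essentially the paper's. The paper multiplies (\ref{a^2_preko_baznih}) by $\widetilde w_3^2$ (your second alternative), kills the $\gamma$-term by (\ref{propimp*}), reduces the $k=0$ monomial via $\widetilde f_1=\widetilde g_{2^t-1}=0$, reads off $\lambda_k=\lambda_0\binom{2^{t-1}-1-k}{2k+1}$, and then identifies $\sum_k\binom{2^{t-1}-1-k}{2k+1}w_2^{2^{t-1}-2-3k}w_3^{2k}$ with $g_{2^t-4}$ exactly by your identity $w_3g_{2^t-4}=g_{2^t-1}$. Your additional Poincar\'e duality upgrade to $\widetilde w_3\widetilde a_{2^t-4}^2=0$ is correct but unnecessary.

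One correction: your justification that $\widetilde{Q_1}=0$ ``by (\ref{r>=n-2})'' is not valid as stated, since (\ref{r>=n-2}) only asserts the vanishing of the specific classes $\widetilde g_r$, not of an arbitrary degree-$(2^t-1)$ polynomial in $\widetilde w_2,\widetilde w_3$. The paper avoids this by citing the main result of \cite{CP} directly, namely $\widetilde a_{2^t-1}^2=0$ in $H^*(\widetilde G_{2^t,3})$, so that $Q_1=Q_2=0$ and $\widetilde w_3^2\widetilde a_{2^t-4}^2=\widetilde i^*(\widetilde a_{2^t-1}^2)=0$ immediately. With this one-line fix your argument is complete and coincides with the paper's.
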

\begin{proof}
The main result of \cite{CP} is the equality $\widetilde a_{2^t-1}^2=0$, where $\widetilde a_{2^t-1}\in H^{2^t-1}(\widetilde G_{2^t,3})$ is the indecomposable class. Therefore, by (\ref{i*2^t-->2^t-1}) we get
\[\widetilde w_3^2\widetilde a_{2^t-4}^2=(\widetilde w_3\widetilde a_{2^t-4})^2=\big(\,\widetilde i^*(\widetilde a_{2^t-1})\big)^2=\widetilde i^*(\widetilde a_{2^t-1}^2)=0.\]
So, multiplying (\ref{a^2_preko_baznih}) by $\widetilde w_3^2$ leads to 
  \begin{equation}\label{w3^2a^2=0}
        0 = \widetilde w_3^2 \widetilde a_{2^t-4}^2 = \sum_{k=0}^{\left\lfloor \frac{2^{t-1}-2}{3} \right\rfloor} \lambda_k \widetilde a_{2^t-4} \widetilde w_2^{2^{t-1}-2-3k}\widetilde w_3^{2k+2}
    \end{equation}
($\gamma\widetilde w_2^{2^t-4}\widetilde w_3^2=0$ due to (\ref{propimp*})). The classes $\widetilde a_{2^t-4} \widetilde w_2^{2^{t-1}-2-3k}\widetilde w_3^{2k+2}$ for $k\ge1$ are elements of the basis $B$, since there is no $i\in\{0,1,\ldots,t-1\}$ such that $2^{t-1}-2-3k\ge2^{t-1}-2^i$ and $2k+2\ge2^i-1$ (if $k\ge1$). However, for $k=0$ we have the class $\widetilde a_{2^t-4} \widetilde w_2^{2^{t-1}-2}\widetilde w_3^2$, which is not in $B$. We wish to express this class as a sum of basis elements. In order to do so, note that the monomial $a_{2^t-4}w_2^{2^{t-1}-2}w_3^2$ is divisible by $\mathrm{LM}(f_1)=w_2^{2^{t-1}-2}w_3$ (see (\ref{LMfi})), and by (\ref{grebnerova_baza}) we have
\begin{equation}\label{f_1} 
   f_1 =g_{2^t-1} = \sum_{2b+3c=2^t-1} \binom{b+c}{b}w_2^b w_3^c= \sum_{k = 0}^{\left\lfloor \frac{2^{t-1}-2}{3} \right\rfloor} \binom{2^{t-1}-1-k}{2k+1} w_2^{2^{t-1}-2-3k}w_3^{2k+1}. 
\end{equation}
The last equality comes from the assertion that $c$ must be odd (since $2b+3c=2^t-1$), and then one substitutes $2k+1$ for $c$. We know that $\widetilde f_1=0$ in $H^*(\widetilde G_{2^t-1,3})$, and so
\begin{align*}
0&=\sum_{k = 0}^{\left\lfloor \frac{2^{t-1}-2}{3} \right\rfloor} \binom{2^{t-1}-1-k}{2k+1}\widetilde w_2^{2^{t-1}-2-3k}\widetilde w_3^{2k+1}\\
 &=\widetilde w_2^{2^{t-1}-2}\widetilde w_3+\sum_{k = 1}^{\left\lfloor \frac{2^{t-1}-2}{3} \right\rfloor} \binom{2^{t-1}-1-k}{2k+1}\widetilde w_2^{2^{t-1}-2-3k}\widetilde w_3^{2k+1}.
\end{align*}
Multiplying this equation with $\widetilde a_{2^t-4}\widetilde w_3$ we get
    \begin{equation}\label{preko_baznih}
    \widetilde a_{2^t-4}\widetilde w_2^{2^{t-1}-2} \widetilde w_3^2 = \sum_{k = 1}^{\left\lfloor \frac{2^{t-1}-2}{3} \right\rfloor} \binom{2^{t-1}-1-k}{2k+1} \widetilde a_{2^t-4}\widetilde w_2^{2^{t-1}-2-3k} \widetilde w_3^{2k+2}
    \end{equation}
Inserting (\ref{preko_baznih}) in (\ref{w3^2a^2=0}) leads to
\[0 = \sum_{k=1}^{\left\lfloor \frac{2^{t-1}-2}{3} \right\rfloor} \left(\lambda_k + \lambda_0 \binom{2^{t-1} -1 -k}{2k+1}\right) \widetilde a_{2^t-4} \widetilde w_2^{2^{t-1}-2-3k}\widetilde w_3^{2k+2}.\] 
As we have already indicated, the right-hand side is a linear combination of \em basis \em elements, which means that all coefficients must vanish. Therefore, $\lambda_k = \lambda_0 \binom{2^{t-1} -1 -k}{2k+1}$ for all $k$ such that $1\le k\le\lfloor(2^{t-1}-2)/3\rfloor$. We conclude that 
\[\sum_{k=0}^{\left\lfloor \frac{2^{t-1}-2}{3} \right\rfloor} \lambda_k \widetilde a_{2^t-4} \widetilde w_2^{2^{t-1}-2-3k}\widetilde w_3^{2k}=\lambda_0\widetilde a_{2^t-4} \sum_{k=0}^{\left\lfloor \frac{2^{t-1}-2}{3} \right\rfloor} \binom{2^{t-1} -1 -k}{2k+1} \widetilde w_2^{2^{t-1}-2-3k}\widetilde w_3^{2k}.\]
It remains to prove that this sum on the right-hand side is equal to $\widetilde g_{2^t-4}$. If
\[S = \sum_{k=0}^{\left\lfloor \frac{2^{t-1}-2}{3} \right\rfloor} \binom{2^{t-1} -1 -k}{2k+1} w_2^{2^{t-1}-2-3k}  w_3^{2k},\] 
then using (\ref{f_1}) and (\ref{recgpolk3}) we get that in $\mathbb Z_2[w_2,w_3]$ the following identity holds
\[w_3 S =  g_{2^t-1} = w_2 g_{2^t-3} + w_3 g_{2^t-4} = w_3 g_{2^t-4}\]
(since $g_{2^t-3} = 0$). Canceling out $w_3$ finishes the proof.
\end{proof}

So now (\ref{a^2_preko_baznih}) simplifies to
\begin{equation}\label{eqq}
\widetilde a_{2^t-4}^2=\lambda_0\widetilde g_{2^t-4}\widetilde a_{2^t-4}+\gamma\widetilde w_2^{2^t-4}
\end{equation}
(for some $\lambda_0,\gamma\in\mathbb Z_2$), and in order to obtain (\ref{eq}) (and thus finish the proof of Theorem \ref{mainthm}(a)) we are left to prove that $\lambda_0=1$.

\begin{proposition}\label{kercapker1}
    Let $H^{2^t-4}(G_{2^t-1,3})\stackrel{w_1}\longrightarrow H^{2^t-3}(G_{2^t-1,3})$ be the multiplication map and $j^*:H^{2^t-4}(G_{2^t-1,3})\rightarrow H^{2^t-4}(G_{2^t-2,2})$ the map induced by the embedding $j:G_{2^t-2,2}\hookrightarrow G_{2^t-1,3}$ (see Section \ref{preliminaries}). Then
    $\ker w_1\cap \ker j^* = 0$.
\end{proposition}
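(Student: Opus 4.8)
The plan is to mimic the structure of the proof of Proposition~\ref{kercapker2}, but the situation here turns out to be considerably cleaner, because the map $j^*$ now annihilates the top Stiefel--Whitney class $w_3$ and lands in a Grassmannian whose relation ideal starts in a degree strictly above $2^t-4$.

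First I would fix a homogeneous $f\in\mathbb Z_2[w_1,w_2,w_3]$ of degree $2^t-4$ with $[f]\in\ker w_1\cap\ker j^*$. Using the Borel presentation (\ref{kohomologija_grasmanijana}) and Lemma~\ref{naturality}(b) (with $n=2^t-2$, $k=2$), the map $j^*$ is the $\mathbb Z_2$-algebra homomorphism determined by $j^*(w_1)=w_1$, $j^*(w_2)=w_2$, $j^*(w_3)=0$; hence $j^*[f]$ is the coset of $f(w_1,w_2,0)$ in $H^*(G_{2^t-2,2})\cong\mathbb Z_2[w_1,w_2]/I_{2^t-2,2}$. The ideal $I_{2^t-2,2}$ is generated by $\overline w_{2^t-3}$ and $\overline w_{2^t-2}$, i.e.\ in degrees at least $2^t-3$. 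Since $f(w_1,w_2,0)$ is homogeneous of degree $2^t-4<2^t-3$ and $I_{2^t-2,2}$ is a graded ideal, its degree-$(2^t-4)$ component is trivial; thus $[f]\in\ker j^*$ forces $f(w_1,w_2,0)=0$ as a polynomial.

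Next I would exploit $[f]\in\ker w_1$: then $w_1f\in I_{2^t-1,3}$, and since $w_1f$ is homogeneous of degree $2^t-3$ while $I_{2^t-1,3}=(\overline w_{2^t-3},\overline w_{2^t-2},\overline w_{2^t-1})$ is generated in degrees $2^t-3,2^t-2,2^t-1$, only the lowest generator can contribute in this degree; hence $w_1f=c\,\overline w_{2^t-3}$ for some $c\in\mathbb Z_2$. Now I substitute $w_3=0$: the left-hand side becomes $w_1\,f(w_1,w_2,0)=0$ by the previous paragraph, while the right-hand side is $c\,\overline w_{2^t-3}(w_1,w_2,0)$. By the explicit formula (\ref{overline_w}), $\overline w_{2^t-3}$ contains the monomial $w_1^{2^t-3}$ with coefficient $1$, so $\overline w_{2^t-3}(w_1,w_2,0)\neq0$; therefore $c=0$. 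Consequently $w_1f=0$ in the integral domain $\mathbb Z_2[w_1,w_2,w_3]$, whence $f=0$ and $[f]=0$. As $[f]$ was an arbitrary element of $\ker w_1\cap\ker j^*$, this yields $\ker w_1\cap\ker j^*=0$.

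Unlike in Proposition~\ref{kercapker2}, I do not expect to need any delicate binomial-coefficient bookkeeping: the entire argument rests on the degree gap created by $j^*$ killing $w_3$. The only points demanding care---and the closest thing to an obstacle---are the two homogeneity/degree-counting claims (that $I_{2^t-2,2}$ has no elements in degree $2^t-4$, and that a degree-$(2^t-3)$ element of $I_{2^t-1,3}$ must be a scalar multiple of $\overline w_{2^t-3}$), together with the single nonvanishing fact that $\overline w_{2^t-3}$ survives the substitution $w_3=0$, which is immediate from the presence of the $w_1^{2^t-3}$ term.
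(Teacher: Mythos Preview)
Your proof is correct and follows essentially the same route as the paper's: both use that $j^*$ kills $w_3$ together with the degree gap to force $f(w_1,w_2,0)=0$ (equivalently, $f=w_3h$), then write $w_1f=c\,\overline w_{2^t-3}$ by degree reasons and use the presence of the monomial $w_1^{2^t-3}$ in $\overline w_{2^t-3}$ to conclude $c=0$ and hence $f=0$. The only cosmetic difference is that the paper phrases the final step as ``$\overline w_{2^t-3}$ is not divisible by $w_3$'' while you phrase it as ``$\overline w_{2^t-3}(w_1,w_2,0)\neq0$''.
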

\begin{proof} As in the proof of Proposition \ref{kercapker2}, let $f\in\mathbb Z_2[w_1,w_2,w_3]$ be such that for the corresponding cohomology class in $H^{2^t-4}(G_{2^t-1,3})$ one has $[f]\in \ker w_1\cap \ker j^*$. We want to prove that $[f]=0$ in $H^{2^t-4}(G_{2^t-1,3})$.

By Lemma \ref{naturality}(b), $j^*[f] = [\rho(f)]$, where $\rho:\mathbb Z_2[w_1,w_2,w_3]\rightarrow\mathbb Z_2[w_1,w_2]$ is the reduction modulo $w_3$. Since $j^*[f] = 0$, we first get $[\rho(f)]=0$, but actually $\rho(f)=0$, because the degree of $\rho(f)$ in $\mathbb Z_2[w_1,w_2]$ is $2^t-4$, and $H^*(G_{2^t-2,2})\cong\mathbb Z_2[w_1,w_2]/(\overline w_{2^t-3},\overline w_{2^t-2})$. We conclude that $f=w_3h$, for some $h\in\mathbb Z_2[w_1,w_2,w_3]$.

On the other hand, $[f]\in \ker w_1$ means that $[w_1f] = 0$ in $H^{2^t-3}(G_{2^t-1,3})$, and since $H^*(G_{2^t-1,3})\cong\mathbb Z_2[w_1,w_2,w_3]/(\overline w_{2^t-3},\overline w_{2^t-2},\overline w_{2^t-1})$, we conclude that $w_1f = \alpha \overline w_{2^t-3}$ for some $\alpha\in\mathbb Z_2$. When we combine these two results, we get 
\[w_1w_3h = \alpha \overline w_{2^t-3}.\]
If $\alpha$  were nonzero, then the polynomial $\overline w_{2^t-3}$ would be divisible by $w_3$. However, it is obvious that the monomial $w_1^{2^t-3}$ appears in $\overline w_{2^t-3}$ with nonzero coefficient (see (\ref{overline_w})). Therefore, $\alpha = 0$, which implies $w_1f = \alpha \overline w_{2^t-3}=0$, and so $f=0$.
\end{proof}

We know that $\widetilde a_{2^t-4}\in H^{2^t-4}(\widetilde G_{2^t-1,3})\setminus\im p^*$, and so Proposition \ref{kercapker1} and Lemma \ref{osnovna_lema}(b) (for $r=2^t-4$, $n=2^t-2$ and $k=2$) imply 
\begin{equation}\label{j*a}
  \widetilde j^*(\widetilde a_{2^t-4})\in H^{2^t-4}(\widetilde G_{2^t-2,2})\setminus\im p^*  
\end{equation} 
(here, $\im p^*$ is the subalgebra of $H^*(\widetilde G_{2^t-2,2})$ generated by $\widetilde w_2$). A complete des\-cription of $H^*(\widetilde G_{2^t-2,2})$ is given in \cite[Theorem 2.1(b)]{KorbasRusin:Palermo} and reads as follows:
\begin{equation}\label{Korbas_n,2}
    H^*(\widetilde G_{2^t-2,2})\cong\frac{\mathbb Z_2[w_2,b_{2^t-4}]}{\big(w_2^{2^{t-1}-1},b_{2^t-4}^2 - w_2^{2^{t-1}-2}b_{2^t-4}\big)}.
    \end{equation}
If $\widetilde b_{2^t-4}\in H^{2^t-4}(\widetilde G_{2^t-2,2})$ is the class corresponding to (the coset of) $b_{2^t-4}$ via the isomorphism (\ref{Korbas_n,2}), then (\ref{j*a}) implies
\begin{equation}\label{j*a=}
 \widetilde j^*(\widetilde a_{2^t-4})=\widetilde b_{2^t-4}+\mu\widetilde w_2^{2^{t-1}-2},   
\end{equation}
for some $\mu\in\mathbb Z_2$. Since $\widetilde w_2^{2^t-4}=0$ in $H^*(\widetilde G_{2^t-2,2})$, squaring the equation (\ref{j*a=}) leads to
\begin{equation}\label{j*a^2}
 \widetilde j^*(\widetilde a_{2^t-4}^2)=\widetilde b_{2^t-4}^2\neq0 \quad \mbox{ in } H^{2^{t+1}-8}(\widetilde G_{2^t-2,2}). 
\end{equation}
On the other hand, by (\ref{eqq}) and (\ref{naturalityj})
\[\widetilde j^*(\widetilde a_{2^t-4}^2)=\lambda_0\widetilde j^*(\widetilde g_{2^t-4}\widetilde a_{2^t-4})+\gamma\widetilde w_2^{2^t-4}=\lambda_0\widetilde j^*(\widetilde g_{2^t-4}\widetilde a_{2^t-4}),\]
leading to the conclusion $\lambda_0=1$. This completes the proof of Theorem \ref{mainthm}(a).

\medskip

Note that we have proved that $\widetilde a_{2^t-4}^2\neq0$ in $H^{2^{t+1}-8}(\widetilde G_{2^t-1,3})$. This follows from (\ref{j*a^2}), and it also follows from the description of $H^*(\widetilde G_{2^t-1,3})$ (given in Theorem \ref{mainthm}(a)) itself.

We were not able to compute the coefficient $\gamma\in\mathbb Z_2$ from this description, and likewise we could not deduce whether the top dimensional cohomology class $\widetilde a_{2^t-4}^3\in H^{3\cdot2^t-12}(\widetilde G_{2^t-1,3})$ vanishes or not. Nevertheless, in the upcoming Proposition \ref{ekv} we prove that these two problems are equivalent.

We first show that $\widetilde w_2^{2^{t-1}-2}$ is the only basis element in $\im p^*\cap H^{2^t-4}(\widetilde G_{2^t-1,3})$ whose square is nonzero. Actually, we already know that $(\widetilde w_2^{2^{t-1}-2})^2=\widetilde w_2^{2^t-4}\neq0$ (see (\ref{visina_w_2})), so we now verify that the square of $\widetilde w_2^{2^{t-1}-2-3k}\widetilde w_3^{2k}$ vanishes if $k>0$.

\begin{lemma}\label{kvadrati_su_nula}
If $k>0$ and $2^t-4-6k\ge0$, then
\[\widetilde w_2^{2^t-4-6k}\widetilde w_3^{4k}=0 \quad \mbox{ in } H^*(\widetilde G_{2^t-1,3}).\]
\end{lemma}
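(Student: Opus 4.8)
The plan is to work entirely inside $\im p^*\cong\mathbb Z_2[w_2,w_3]/J_{2^t-1,3}$ and to exploit that the class in question sits in the top nonzero dimension of this algebra. Since $2(2^t-4-6k)+3\cdot4k=2^{t+1}-8$, the class $\widetilde w_2^{2^t-4-6k}\widetilde w_3^{4k}$ lies in $H^{2^{t+1}-8}(\widetilde G_{2^t-1,3})\cap\im p^*$, which by the analysis preceding (\ref{visina_w_2}) is one-dimensional, spanned by the single surviving basis monomial $\widetilde w_2^{2^{t-2}-1}\widetilde w_3^{2^{t-1}-2}$. Equivalently, reducing the monomial $w_2^{2^t-4-6k}w_3^{4k}$ modulo the Gr\"obner basis $\{f_0,\dots,f_{t-1}\}$ of (\ref{grebnerova_baza}) (lexicographic order $w_2>w_3$) produces a normal form equal to $\varepsilon_k\,w_2^{2^{t-2}-1}w_3^{2^{t-1}-2}$ for some $\varepsilon_k\in\mathbb Z_2$, so the lemma reduces to proving that $\varepsilon_k=0$ whenever $k>0$.

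The key fact I would isolate is a congruence governing the $w_3$-exponents occurring in the Gr\"obner basis: for every $i$ with $1\le i\le t-1$, each monomial $w_2^bw_3^c$ appearing with nonzero coefficient in $f_i=g_{2^t-3+2^i}$ satisfies $c\equiv 2^i-1\pmod 4$. To prove this, note that such a $c$ is necessarily odd and at least the leading value $2^i-1$, so one may write $c=2^i-1+2m$ with $m\ge0$; a short computation using (\ref{gpolk3}) rewrites the coefficient of $w_3^c$ as $\binom{2^{t-1}-1-m}{2^i-1+2m}$, and Lucas' theorem applied to the lowest binary digit forces $m$ to be even as soon as this coefficient is odd. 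The immediate consequence is that reducing any monomial by one of $f_1,\dots,f_{t-1}$ changes the $w_3$-exponent only by a multiple of $4$.

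I would then carry out the reduction of $w_2^{2^t-4-6k}w_3^{4k}$ while deliberately avoiding $f_0$, whose leading monomial is the pure power $w_2^{2^{t-1}-1}$ and which, unlike the others, does not respect the behaviour mod $4$. This is legitimate: whenever $\mathrm{LM}(f_0)=w_2^{2^{t-1}-1}$ divides a monomial with positive $w_3$-exponent, then $\mathrm{LM}(f_1)=w_2^{2^{t-1}-2}w_3$ divides it too, so $f_0$ is never forced; and since the smallest $w_3$-exponent among the terms strictly increases at each step while starting from $4k>0$, no pure power of $w_2$ is ever produced. Hence every monomial occurring throughout the reduction, and in particular in the unique normal form, carries a $w_3$-exponent $\equiv 4k\equiv 0\pmod 4$. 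As for $t\ge3$ the surviving monomial $w_2^{2^{t-2}-1}w_3^{2^{t-1}-2}$ has $w_3$-exponent $2^{t-1}-2\equiv 2\pmod4$, it cannot appear, so $\varepsilon_k=0$, which is exactly the assertion.

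The bookkeeping of the reduction is routine; the genuinely delicate point, which I would write out with care, is the congruence $c\equiv 2^i-1\pmod4$ for the exponents in $f_i$, resting on the elementary Lucas computation at the bottom bit, together with the observation that the unique top-dimensional survivor lies in the opposite residue class mod $4$. Once these two facts are in place the conclusion follows uniformly, with no case split between small and large $k$.
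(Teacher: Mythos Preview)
Your argument is correct and rests on the same key combinatorial fact as the paper's proof: for $i\ge1$ (the paper uses $i\ge2$), every monomial occurring in $f_i$ has $w_3$-exponent congruent to $2^i-1$ modulo $4$, so a Gr\"obner reduction step using $f_i$ changes the $w_3$-exponent only by a multiple of $4$. The paper packages this as a reverse induction on $k$ (base case $4k\ge2^{t-1}$ handled by $f_{t-1}=w_3^{2^{t-1}-1}$, inductive step using a specific $f_i$ chosen from the size of $4k$), whereas you phrase it as a global invariant on the normal-form computation and then observe that the unique surviving basis monomial $w_2^{2^{t-2}-1}w_3^{2^{t-1}-2}$ lies in the residue class $2\pmod4$; the two arguments are essentially the same, yours being slightly more uniform in that no base case or choice of $i$ is needed.
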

\begin{proof}
The proof is by reverse induction on $k$. If $k\ge2^{t-3}$, i.e., $4k\ge2^{t-1}$, then
\[\widetilde w_2^{2^t-4-6k}\widetilde w_3^{4k}=\widetilde w_2^{2^t-4-6k}\widetilde w_3^{4k-2^{t-1}+1}\widetilde w_3^{2^{t-1}-1}=\widetilde w_2^{2^t-4-6k}\widetilde w_3^{4k-2^{t-1}+1}\widetilde f_{t-1}=0,\]
by (\ref{ft-1}).

Now let $0<k<2^{t-3}$ and suppose that $\widetilde w_2^{2^t-4-6j}\widetilde w_3^{4j}=0$ for all integers $j>k$ (such that $2^t-4-6j\ge0$). There exists a unique integer $i\in\{2,\ldots,t-2\}$ with the property $2^i\le4k\le2^{i+1}-4$. Note that then 
\[12k\le3\cdot2^{i+1}-12=2^{i+2}+2^{i+1}-12\le2^t+2^{i+1}-12,\]
i.e., $6k\le2^{t-1}+2^i-6$, implying $2^t-4-6k>2^{t-1}-2^i$. This means that we can write
\begin{equation}\label{eq1}
\widetilde w_2^{2^t-4-6k}\widetilde w_3^{4k}=\widetilde w_2^{2^t-4-6k-2^{t-1}+2^i}\widetilde w_3^{4k-2^i+1}\widetilde w_2^{2^{t-1}-2^i}\widetilde w_3^{2^i-1}.
\end{equation}
On the other hand, according to (\ref{LMfi}) and (\ref{grebnerova_baza}), 
\[w_2^{2^{t-1}-2^i}w_3^{2^i-1}=\mathrm{LM}(f_i)=f_i+\!\!\!\sum_{\substack{2b+3c=2^t-3+2^i\\(b,c)\neq(2^{t-1}-2^i,2^i-1)}}{b+c\choose b}w_2^bw_3^c.\]
Since $\widetilde f_i=0$, by (\ref{eq1}) in cohomology we have:
\begin{align*}
\widetilde w_2^{2^t-4-6k}\widetilde w_3^{4k}&=\widetilde w_2^{2^{t-1}+2^i-4-6k}\widetilde w_3^{4k-2^i+1}\widetilde w_2^{2^{t-1}-2^i}\widetilde w_3^{2^i-1}\\
                                            &=\widetilde w_2^{2^{t-1}+2^i-4-6k}\widetilde w_3^{4k-2^i+1}\!\!\!\sum_{\substack{2b+3c=2^t-3+2^i\\(b,c)\neq(2^{t-1}-2^i,2^i-1)}}{b+c\choose b}\widetilde w_2^b\widetilde w_3^c\\
                                            &=\!\!\!\sum_{\substack{2b+3c=2^t-3+2^i\\(b,c)\neq(2^{t-1}-2^i,2^i-1)}}{b+c\choose b}\widetilde w_2^{b+2^{t-1}+2^i-4-6k}\widetilde w_3^{c+4k-2^i+1}.\\
\end{align*}
To complete the induction step, it now suffices to prove that for a summand with ${b+c\choose b}\equiv1\pmod2$ one has $c+4k-2^i+1=4j$ for some $j>k$.

First, since $\mathrm{LM}(f_i)=w_2^{2^{t-1}-2^i}w_3^{2^i-1}$, for a nonzero summand in this sum, one must have $b<2^{t-1}-2^i$, and consequently $c>2^i-1$ (because $2b+3c$ is the same for all monomials in $f_i$). This implies $c+4k-2^i+1>4k$, and so we only need to prove that $c+4k-2^i+1$ is divisible by $4$.

Since $i\ge2$ and $2b+3c=2^t-3+2^i$, we first conclude that $c$ is odd, and therefore $c+4k-2^i+1$ is even, which means that it remains to rule out the possibility $c+4k-2^i+1\equiv2\pmod4$, i.e., $c\equiv1\pmod4$. But $c\equiv1\pmod4$ and $2b+3c=2^t-3+2^i$ would imply $2b\equiv2\pmod4$, and we would have that $b$ is odd, contradicting the fact ${b+c\choose b}\equiv1\pmod2$.
\end{proof}

This lemma ensures that
\begin{equation}\label{g_na_kvadrat}
\widetilde g_{2^t-4}^2=\widetilde w_2^{2^t-4}.
\end{equation}
Namely, the leading monomial of $g_{2^t-4}$ is obviously $w_2^{2^{t-1}-2}$, and all other monomials are of the form $w_2^{2^{t-1}-2-3k}w_3^{2k}$ for some $k>0$.

As we have already indicated, the indecomposable class in $H^{2^t-4}(\widetilde G_{2^t-1,3})$ is not unique. If $\widetilde a_{2^t-4}$ is such a class, then so is $\widetilde a_{2^t-4}+\widetilde w$ for any $\widetilde w\in\im p^*\cap H^{2^t-4}(\widetilde G_{2^t-1,3})$ ($\widetilde a_{2^t-4}$ is unique only up to addition of such a $\widetilde w$). As another consequence of Lemma \ref{kvadrati_su_nula}, we note that taking some other indecomposable class from $H^{2^t-4}(\widetilde G_{2^t-1,3})$ has no effect on equality (\ref{eq}). Indeed, 
\[(\widetilde a_{2^t-4}+\widetilde w)^2=\widetilde a_{2^t-4}^2+\widetilde w^2=\widetilde g_{2^t-4}\widetilde a_{2^t-4}+\gamma\widetilde w_2^{2^t-4}+\widetilde w^2=\widetilde g_{2^t-4}(\widetilde a_{2^t-4}+\widetilde w)+\gamma\widetilde w_2^{2^t-4},\]
where the last equality comes from Lemma \ref{kvadrati_su_nula}. Namely, one can fairly easily show that $\widetilde w^2=\widetilde w\widetilde g_{2^t-4}$ using the lemma and the following facts: 
\begin{itemize}
    \item $\widetilde w$ is a sum of monomials of the form $\widetilde w_2^{2^{t-1}-2-3k}\widetilde w_3^{2k}$ for some $k\ge0$; 
    \item $\widetilde w_3\widetilde g_{2^t-4}=\widetilde g_{2^t-1}=0$ in $H^*(\widetilde G_{2^t-1,3})$ (see (\ref{r>=n-2})); 
    \item for every monomial $\widetilde w_2^b\widetilde w_3^c$ with nonzero coefficient in $\widetilde g_{2^t-4}$ the exponent $c$ is divisible by $4$ (see the proof of Proposition \ref{lambde}, where $g_{2^t-4}$ is denoted by $S$).

\end{itemize}

\begin{proposition}\label{ekv}
For an indecomposable class $\widetilde a_{2^t-4}\in H^{2^t-4}(\widetilde G_{2^t-1,3})$ and the coefficient $\gamma\in\mathbb Z_2$ from Theorem \ref{mainthm}(a) the following equivalence holds:
\[\widetilde a_{2^t-4}^3\neq0 \quad \Longleftrightarrow \quad \gamma=0.\]
\end{proposition}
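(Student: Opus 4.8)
The plan is to express $\widetilde a_{2^t-4}^3$ as a scalar multiple of the nonzero top-dimensional class $\widetilde a_{2^t-4}\widetilde w_2^{2^t-4}$ and then read off the equivalence from the value of that scalar. First I would multiply the quadratic relation (\ref{eq}) by $\widetilde a_{2^t-4}$ to obtain
\[\widetilde a_{2^t-4}^3=\widetilde g_{2^t-4}\,\widetilde a_{2^t-4}^2+\gamma\,\widetilde w_2^{2^t-4}\widetilde a_{2^t-4},\]
and then substitute (\ref{eq}) a second time for the factor $\widetilde a_{2^t-4}^2$ appearing in the first summand. This gives
\[\widetilde a_{2^t-4}^3=\widetilde g_{2^t-4}^2\,\widetilde a_{2^t-4}+\gamma\,\widetilde g_{2^t-4}\widetilde w_2^{2^t-4}+\gamma\,\widetilde w_2^{2^t-4}\widetilde a_{2^t-4}.\]

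Next I would simplify the three terms. By (\ref{g_na_kvadrat}) we have $\widetilde g_{2^t-4}^2=\widetilde w_2^{2^t-4}$, so the first summand becomes $\widetilde w_2^{2^t-4}\widetilde a_{2^t-4}$. The middle summand $\gamma\,\widetilde g_{2^t-4}\widetilde w_2^{2^t-4}$ lies in the subalgebra $\im p^*$ (being a product of elements of $\im p^*$) and sits in cohomological dimension $(2^t-4)+(2^{t+1}-8)=3\cdot2^t-12$, which is strictly greater than $2^{t+1}-8$ for every $t\ge3$; hence it vanishes by (\ref{propimp*}). Combining the surviving terms yields
\[\widetilde a_{2^t-4}^3=(1+\gamma)\,\widetilde a_{2^t-4}\widetilde w_2^{2^t-4}.\]

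Finally, since $\widetilde a_{2^t-4}\widetilde w_2^{2^t-4}\neq0$ in $H^{3\cdot2^t-12}(\widetilde G_{2^t-1,3})\cong\mathbb Z_2$ by (\ref{topdim}), the class $\widetilde a_{2^t-4}^3$ is nonzero precisely when the coefficient $1+\gamma$ is nonzero in $\mathbb Z_2$, i.e.\ precisely when $\gamma=0$, which is the claimed equivalence. The computation is short, and the only step requiring genuine care is the vanishing of the cross term $\gamma\,\widetilde g_{2^t-4}\widetilde w_2^{2^t-4}$: one must verify both that it belongs to $\im p^*$ and that its degree exceeds the threshold $2^{t+1}-8$ of (\ref{propimp*}). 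Everything else is a direct application of the relations (\ref{eq}), (\ref{g_na_kvadrat}) and (\ref{topdim}) already established.
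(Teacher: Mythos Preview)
Your proof is correct and follows essentially the same approach as the paper: multiply (\ref{eq}) by $\widetilde a_{2^t-4}$, substitute (\ref{eq}) once more, kill the cross term $\gamma\,\widetilde g_{2^t-4}\widetilde w_2^{2^t-4}$ using (\ref{propimp*}), apply (\ref{g_na_kvadrat}) to obtain $(1+\gamma)\widetilde a_{2^t-4}\widetilde w_2^{2^t-4}$, and conclude via (\ref{topdim}). The reasoning and the ingredients used are identical to the paper's own argument.
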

\begin{proof}
By (\ref{eq}) and (\ref{g_na_kvadrat}) we have
\begin{align*}
\widetilde a_{2^t-4}^3&=\widetilde g_{2^t-4}\widetilde a_{2^t-4}^2+\gamma\widetilde a_{2^t-4}\widetilde w_2^{2^t-4}=\widetilde g_{2^t-4}^2\widetilde a_{2^t-4}+\gamma\widetilde g_{2^t-4}\widetilde w_2^{2^t-4}+\gamma\widetilde a_{2^t-4}\widetilde w_2^{2^t-4}\\
                      &=\widetilde g_{2^t-4}^2\widetilde a_{2^t-4}+\gamma\widetilde a_{2^t-4}\widetilde w_2^{2^t-4}=(1+\gamma)\widetilde a_{2^t-4}\widetilde w_2^{2^t-4},\\
\end{align*}
since $\widetilde g_{2^t-4}\widetilde w_2^{2^t-4}\in H^{3\cdot2^t-12}(\widetilde G_{2^t-1,3})\cap\im p^*=0$ (by (\ref{propimp*})). On the other hand, by (\ref{topdim}) we know that $\widetilde a_{2^t-4}\widetilde w_2^{2^t-4}$ is the nonzero element in $H^{3\cdot2^t-12}(\widetilde G_{2^t-1,3})$, and so this equality proves the proposition.
\end{proof}

\section{The case $n=2^t-2$}
\label{2t-2}

By \cite[Theorem A(b)]{BasuChakraborty} the cohomology algebra $H^*(\widetilde G_{2^t-2,3})$ contains an indecomposable class in dimension $2^t-4$. Similarly as in the case $n=2^t-1$, for this class we can choose any $\widetilde a_{2^t-4}\in H^{2^t-4}(\widetilde G_{2^t-2,3})\setminus\im p^*$. If $\widetilde i:\widetilde G_{2^t-2,3}\hookrightarrow\widetilde G_{2^t-1,3}$ is the embedding and $\widetilde a_{2^t-4}\in H^{2^t-4}(\widetilde G_{2^t-1,3})$ the indecomposable class from the previous section, we will prove that $\widetilde i^*(\widetilde a_{2^t-4})\in H^{2^t-4}(\widetilde G_{2^t-2,3})\setminus\im p^*$, and then make a choice:
\begin{equation}\label{a=i*a}
  \widetilde a_{2^t-4}:=\widetilde i^*(\widetilde a_{2^t-4}). 
\end{equation}
Similarly as before, the relation $\widetilde i^*(\widetilde a_{2^t-4})\in H^{2^t-4}(\widetilde G_{2^t-2,3})\setminus\im p^*$ will be a direct consequence of Lemma \ref{osnovna_lema}(a) and the following proposition.

\begin{proposition}\label{kercapker3}
   Let $H^{2^t-4}(G_{2^t-1,3})\stackrel{w_1}\longrightarrow H^{2^t-3}(G_{2^t-1,3})$ be the multiplication map and $i^*:H^{2^t-4}(G_{2^t-1,3})\rightarrow H^{2^t-4}(G_{2^t-2,3})$ the map induced by the embedding $i:G_{2^t-2,3}\hookrightarrow G_{2^t-1,3}$ (see Section \ref{preliminaries}). Then
    $\ker w_1\cap \ker i^* = 0$.
\end{proposition}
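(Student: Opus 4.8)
The plan is to mimic the structure of the proof of Proposition~\ref{kercapker1}, exploiting the fact that the embedding $i\colon G_{2^t-2,3}\hookrightarrow G_{2^t-1,3}$ induces $i^*$ which, by Lemma~\ref{naturality}(a), fixes each $w_r(\gamma)$ (so $i^*$ is induced by the identity on $\mathbb Z_2[w_1,w_2,w_3]$, merely passing to a different quotient). First I would take a homogeneous $f\in\mathbb Z_2[w_1,w_2,w_3]$ of degree $2^t-4$ representing an arbitrary class $[f]\in\ker w_1\cap\ker i^*$, and use the two conditions to pin down $f$ modulo the relevant ideals. Since $H^*(G_{2^t-2,3})\cong\mathbb Z_2[w_1,w_2,w_3]/(\overline w_{2^t-4},\overline w_{2^t-3},\overline w_{2^t-2})$ and $i^*[f]=[f]=0$ there, the condition $[f]\in\ker i^*$ forces $f$ to lie in the ideal $(\overline w_{2^t-4},\overline w_{2^t-3},\overline w_{2^t-2})$; because $\deg f=2^t-4$ only $\overline w_{2^t-4}$ can contribute, so $f=\kappa\,\overline w_{2^t-4}$ for some $\kappa\in\mathbb Z_2$.

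Next I would impose $[f]\in\ker w_1$ inside $H^{2^t-4}(G_{2^t-1,3})\cong\mathbb Z_2[w_1,w_2,w_3]/(\overline w_{2^t-3},\overline w_{2^t-2},\overline w_{2^t-1})$: the relation $w_1 f=0$ in $H^{2^t-3}$ means $w_1 f$ lies in the degree-$(2^t-3)$ part of $(\overline w_{2^t-3},\overline w_{2^t-2},\overline w_{2^t-1})$, i.e.\ $w_1 f=\alpha\,\overline w_{2^t-3}$ for some $\alpha\in\mathbb Z_2$. Substituting $f=\kappa\,\overline w_{2^t-4}$ gives $\kappa\,w_1\overline w_{2^t-4}=\alpha\,\overline w_{2^t-3}$. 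I would then compare specific monomials using the explicit formula~(\ref{overline_w}): the recurrence~(\ref{w_rekurzivno}) yields $\overline w_{2^t-3}=w_1\overline w_{2^t-4}+w_2\overline w_{2^t-5}+w_3\overline w_{2^t-6}$, so the equation becomes $(\kappa+\alpha)w_1\overline w_{2^t-4}=\alpha(w_2\overline w_{2^t-5}+w_3\overline w_{2^t-6})$. Inspecting a monomial that appears on exactly one side—for instance a high power of $w_2$, or the pure power $w_1^{2^t-3}$ which occurs in $w_1\overline w_{2^t-4}$ but, by a binomial-coefficient check in~(\ref{overline_w}), not in $w_2\overline w_{2^t-5}+w_3\overline w_{2^t-6}$—should force $\alpha=0$ and $\kappa=\alpha$, hence $\kappa=0$ and $f=0$.

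I expect the main obstacle to be the monomial bookkeeping that separates $\kappa$ from $\alpha$: unlike Proposition~\ref{kercapker1}, where reduction modulo $w_3$ killed everything at a stroke, here both $\overline w_{2^t-4}$ and $\overline w_{2^t-3}$ genuinely involve all three variables, so I must identify a monomial whose coefficient distinguishes the two sides. The cleanest route is probably to track the coefficient of $w_1^{2^t-3}$ (forcing $\alpha=0$) and then the coefficient of $w_2^{2^{t-1}-2}$ or a similar extremal monomial (forcing $\kappa=0$); the binomial coefficients $\binom{a+b+c}{a}\binom{b+c}{b}\bmod 2$ must be evaluated via Lucas' theorem on the relevant triples $(a,b,c)$. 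A small subtlety to watch is degree parity—since $2^t-4$ is even, $f$ need not be divisible by $w_3$, so I cannot short-circuit the argument the way Proposition~\ref{kercapker1} did; the whole force of the proof rests on these coefficient comparisons.
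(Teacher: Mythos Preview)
Your overall plan matches the paper's proof exactly: write $f=\kappa\,\overline w_{2^t-4}$ from $\ker i^*$, write $w_1f=\alpha\,\overline w_{2^t-3}$ from $\ker w_1$, and then compare monomials in
\[
\kappa\,w_1\overline w_{2^t-4}=\alpha\,\overline w_{2^t-3}.
\]
The only problem is the particular monomials you name. Tracking $w_1^{2^t-3}$ (either in this equation or in the expanded form $(\kappa+\alpha)w_1\overline w_{2^t-4}=\alpha(w_2\overline w_{2^t-5}+w_3\overline w_{2^t-6})$) gives only $\kappa=\alpha$, not $\alpha=0$: the monomial $w_1^{2^t-3}$ appears with coefficient $1$ on \emph{both} sides of the original equation. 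Your second suggestion, $w_2^{2^{t-1}-2}$, has degree $2^t-4$, not $2^t-3$; and its natural replacement $w_1w_2^{2^{t-1}-2}$ again yields just $\kappa=\alpha$. So as written, you never obtain a second independent relation, and the argument does not close.

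The paper sidesteps this by choosing a single monomial that appears on the left but not on the right of the \emph{unexpanded} equation: it looks at $w_1^3w_2^{2^{t-1}-3}$. By (\ref{overline_w}), $w_1^2w_2^{2^{t-1}-3}$ occurs in $\overline w_{2^t-4}$ with coefficient $\binom{2^{t-1}-1}{2}\equiv1$, while $w_1^3w_2^{2^{t-1}-3}$ occurs in $\overline w_{2^t-3}$ with coefficient $\binom{2^{t-1}}{3}\equiv0$. This immediately forces $\kappa=0$ (and the value of $\alpha$ is then irrelevant). If you prefer to keep your two-step route, you can first use $w_1^{2^t-3}$ to get $\kappa=\alpha$ and then use $w_1^3w_2^{2^{t-1}-3}$ to conclude $\kappa=0$; but the recurrence expansion is unnecessary.
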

\begin{proof}
The proof is similar to those of Propositions \ref{kercapker2} and \ref{kercapker1}. Take a polynomial $f\in\mathbb Z_2[w_1,w_2,w_3]$ such that $[f]\in \ker w_1\cap \ker i^*$. By Lemma \ref{naturality}(a), $[f]=i^*[f]=0$ in $H^{2^t-4}(G_{2^t-2,3})$, and so $f = \alpha \overline w_{2^t-4}$ for some $\alpha\in\mathbb Z_2$. On the other hand, since $[w_1f]=0$ in $H^{2^t-3}(G_{2^t-1,3})$, we have $w_1f = \beta \overline w_{2^t-3}$ for some $\beta\in\mathbb Z_2$. Combining these two equations, we get 
\begin{equation}\label{eq3}
    \alpha w_1\overline w_{2^t-4} = \beta \overline w_{2^t-3}.
\end{equation}
If $\alpha$ were nonzero, then the monomial $w_1^3w_2^{2^{t-1}-3}$ would appear on the left-hand side of (\ref{eq3}) with nonzero coefficient (since $w_1^2w_2^{2^{t-1}-3}$ appears in $\overline w_{2^t-4}$ with nonzero coefficient, by (\ref{overline_w})). However, the coefficient of $w_1^3w_2^{2^{t-1}-3}$ in $\overline w_{2^t-3}$ is $\binom{2^{t-1}}{3}=0$. We conclude that $\alpha=0$, and so $f=\alpha\overline w_{2^t-4}=0$.
\end{proof}

So now we have the indecomposable class $\widetilde a_{2^t-4}\in H^{2^t-4}(\widetilde G_{2^t-2,3})$, chosen by (\ref{a=i*a}). According to \cite[Theorem B(2)]{BasuChakraborty} there exists an algebra isomorphism
\begin{equation}\label{isomorphism2^t-2}
 H^*(\widetilde G_{2^t-2,3})\cong\frac{\mathbb Z_2[w_2,w_3,a_{2^t-4}]}{\big(g_{2^t-4},g_{2^t-2},a_{2^t-4}^2-P_1a_{2^t-4}\big)},  
 \end{equation}
 which maps the classes $\widetilde w_2$, $\widetilde w_3$ and $\widetilde a_{2^t-4}$ to the cosets of $w_2$, $w_3$ and $a_{2^t-4}$ respectively, and $P_1$ is some polynomial in variables $w_2$ and $w_3$ only. In order to prove part (b) of Theorem \ref{mainthm}, we need to show that for $P_1$ we can take the zero polynomial, which amounts to showing that $\widetilde a_{2^t-4}^2=0$ in $H^*(\widetilde G_{2^t-2,3})$. We shall need the following lemma.

 \begin{lemma}\label{w_2}
  We have $\widetilde w_2^{2^t-4}=0$ in $H^{2^{t+1}-8}(\widetilde G_{2^t-2,3})$.
 \end{lemma}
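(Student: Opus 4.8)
The plan is to prove $\widetilde w_2^{2^t-4}=0$ in $H^{2^{t+1}-8}(\widetilde G_{2^t-2,3})$ by relating the cohomology of $\widetilde G_{2^t-2,3}$ to that of $\widetilde G_{2^t-1,3}$, where we have already established (in equation~(\ref{visina_w_2})) that $\widetilde w_2^{2^t-4}\neq0$. This sounds like the wrong direction, so instead I would exploit the fact that $2^{t+1}-8$ is a \emph{high} cohomological dimension for $\widetilde G_{2^t-2,3}$. The key numerical observation is that the dimension of the manifold $\widetilde G_{2^t-2,3}$ is $3(2^t-2-3)=3\cdot2^t-15$, and more importantly that the top-dimensional basis elements from $\im p^*$ in this case top out below $2^{t+1}-8$, forcing the class to vanish.

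The cleanest route is to produce for $H^*(\widetilde G_{2^t-2,3})$ an analogue of the implication~(\ref{propimp*}), namely that $H^j(\widetilde G_{2^t-2,3})\cap\im p^*=0$ for $j$ large enough, and then show $2^{t+1}-8$ exceeds that threshold. By the isomorphism~(\ref{isomorphism_imp*}), $\im p^*\cong\mathbb Z_2[w_2,w_3]/J_{2^t-2,3}$ with $J_{2^t-2,3}=(g_{2^t-4},g_{2^t-3},g_{2^t-2})=(g_{2^t-4},g_{2^t-2})$. First I would identify the leading monomials of an appropriate Gröbner basis for this ideal. Since $g_{2^t-4}$ has leading monomial $w_2^{2^{t-1}-2}$ (as noted just after~(\ref{g_na_kvadrat})), the relation $\widetilde w_2^{2^{t-1}-2}=\widetilde g_{2^t-4}$ holds in $\im p^*$, so in fact $\widetilde w_2^{2^{t-1}-1}=\widetilde w_2\widetilde g_{2^t-4}$; iterating and combining with $\widetilde g_{2^t-2}=0$ (by~(\ref{r>=n-2})) should let me bound the $w_2$-height and, via the Gröbner-basis monomial count, bound the top nonvanishing dimension of $\im p^*$.

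Concretely, I would show that $\widetilde w_2^{2^t-4}$, which equals $(\widetilde w_2^{2^{t-1}-2})^2=\widetilde g_{2^t-4}^2$ in $\im p^*$ by the argument preceding~(\ref{g_na_kvadrat}) transplanted to this manifold, lands in a cohomological dimension strictly above the top dimension of $\im p^*$. The subalgebra $\im p^*$ is a quotient of $\mathbb Z_2[w_2,w_3]/(g_{2^t-4},g_{2^t-2})$; since $g_{2^t-4}$ kills $w_2^{2^{t-1}-2}$ modulo lower terms and the second generator $g_{2^t-2}$ caps the $w_3$-powers, a direct computation of the highest-dimensional surviving monomial (exactly as in the derivation of~(\ref{ineq}), but with the constraints coming from $g_{2^t-4}$ rather than $f_i$) should yield a top dimension for $\im p^*\cap H^*(\widetilde G_{2^t-2,3})$ that is strictly less than $2^{t+1}-8$. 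Because $\widetilde w_2^{2^t-4}\in\im p^*$ necessarily, vanishing in $\im p^*$ gives vanishing in the whole algebra.

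The main obstacle I anticipate is pinning down the exact top nonzero dimension of $\im p^*$ for the ideal $(g_{2^t-4},g_{2^t-2})$: unlike the $n=2^t-1$ case, where Fukaya's explicit basis~(\ref{grebnerova_baza}) and its leading monomials~(\ref{LMfi}) are handed to us, here I must either recompute a Gröbner basis for $J_{2^t-2,3}$ or argue more cleverly. A slicker alternative that avoids this is to use the embedding $\widetilde i:\widetilde G_{2^t-2,3}\hookrightarrow\widetilde G_{2^t-1,3}$ in reverse: rather than restricting, I would look at where $\widetilde w_2^{2^t-4}$ sits relative to the Poincaré-duality top class. Since $\dim\widetilde G_{2^t-2,3}=3\cdot2^t-15<2^{t+1}-8$ fails for $t\ge3$ (indeed $3\cdot2^t-15>2^{t+1}-8$ for $t\ge3$), this naive dimension count does not immediately work, so the Gröbner/monomial-counting argument on $\im p^*$ really is the essential content, and getting the leading-monomial bookkeeping right for the pair $(g_{2^t-4},g_{2^t-2})$ is where the care is needed.
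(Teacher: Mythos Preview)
Your plan—show that $\im p^*$ already vanishes in degree $2^{t+1}-8$, hence so does $\widetilde w_2^{2^t-4}\in\im p^*$—is sound, but you overlook two immediate completions and instead flag a Gr\"obner computation for $J_{2^t-2,3}$ as the obstacle. First, you yourself note that~(\ref{g_na_kvadrat}) transplants to $\widetilde G_{2^t-2,3}$ (correct, since $J_{2^t-1,3}\subseteq J_{2^t-2,3}$: both $g_{2^t-2}$ and $g_{2^t-1}=w_3g_{2^t-4}$ lie in $J_{2^t-2,3}$). But then you are finished: $g_{2^t-4}$ is one of the \emph{generators} of $J_{2^t-2,3}$, so $\widetilde g_{2^t-4}=0$ in $H^*(\widetilde G_{2^t-2,3})$ by~(\ref{r>=n-2}), and $\widetilde w_2^{2^t-4}=\widetilde g_{2^t-4}^2=0$ immediately. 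Second, if you insist on the pure degree bound: $\mathbb Z_2[w_2,w_3]/(g_{2^t-4},g_{2^t-2})$ is finite-dimensional (it is $\im p^*$), hence $(g_{2^t-4},g_{2^t-2})$ is a regular sequence and the quotient is a complete intersection with top degree $(2^t-4)+(2^t-2)-2-3=2^{t+1}-11<2^{t+1}-8$. Either way, no explicit Gr\"obner basis for $J_{2^t-2,3}$ is needed.

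The paper's proof is quite different and purely topological. Assuming $\widetilde w_2^{2^t-4}\neq0$, Poincar\'e duality on the $(3\cdot2^t-15)$-manifold $\widetilde G_{2^t-2,3}$ produces $\sigma\in H^{2^t-7}$ with $\sigma\widetilde w_2^{2^t-4}\neq0$ in top degree. Since $2^t-7<2^t-4$, every class in degree $2^t-7$ is a polynomial in $\widetilde w_2,\widetilde w_3$, so this nonzero top class lies in $\im p^*$. But the tail of the Gysin sequence in top degree shows $p^*=0$ there (both top groups are $\mathbb Z_2$ and $\varphi$ is onto), a contradiction. The paper's route avoids any computation in the quotient ring; your route, once completed, is more algebraic and in fact gives the slightly sharper statement that all of $\im p^*$ vanishes in degree $2^{t+1}-8$.
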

\begin{proof}
    Assume to the contrary that $\widetilde w_2^{2^t-4}\neq0$. The dimension of the manifold $\widetilde G_{2^t-2,3}$ is $3\cdot2^t-15$, and so by the Poincar\'e duality, there exists a class $\sigma\in H^{3\cdot2^t-15-(2^{t+1}-8)}(\widetilde G_{2^t-2,3})$ such that $\sigma\widetilde w_2^{2^t-4}\neq0$ in the top dimensional cohomology group $H^{3\cdot2^t-15}(\widetilde G_{2^t-2,3})$. Since $3\cdot2^t-15-(2^{t+1}-8)=2^t-7<2^t-4$, $\sigma$ is a polynomial in $\widetilde w_2$ and $\widetilde w_3$, i.e., $\sigma\in\im p^*$. This means that the nonzero top dimensional class $\sigma\widetilde w_2^{2^t-4}$ belongs to $\im p^*$. However, the relevant part of the Gysin sequence shows that this is not the case.
    \[
\begin{tikzcd}[column sep = 1.3em]
    \cdots
    \arrow{r}
    &
    H^{3\cdot 2^t - 15}(G_{2^t-2,3})
    \arrow{r}{p^*}
    &
    H^{3\cdot 2^t - 15}(\widetilde G_{2^t-2,3})
    \arrow{r}{\varphi}
    &
    H^{3\cdot 2^t - 15}(G_{2^t-2,3})
    \arrow{r}
    &
    0
\end{tikzcd}
\]
Namely, $H^{3\cdot 2^t - 15}(\widetilde G_{2^t-2,3})\cong H^{3\cdot 2^t - 15}(G_{2^t-2,3})\cong\mathbb Z_2$, $\varphi$ is onto, and hence isomorphism. Therefore, $p^*=0$. This contradiction concludes the proof.
\end{proof}

Now, according to (\ref{a=i*a}), (\ref{eq}) and (\ref{naturalityi})
\[\widetilde a_{2^t-4}^2=\widetilde i^*(\widetilde a_{2^t-4}^2)=\widetilde i^*(\widetilde g_{2^t-4})\cdot\widetilde i^*(\widetilde a_{2^t-4})+\gamma\widetilde i^*(\widetilde w_2^{2^t-4})=\widetilde g_{2^t-4}\widetilde a_{2^t-4}+\gamma\widetilde w_2^{2^t-4}.\]
By Lemma \ref{w_2}, $\widetilde w_2^{2^t-4}=0$, and by (\ref{r>=n-2}), $\widetilde g_{2^t-4}=0$ in $H^*(\widetilde G_{2^t-2,3})$. So, $\widetilde a_{2^t-4}^2=0$, and the proof of Theorem \ref{mainthm}(b) is complete.

\section{The case $n=2^t-3$}
\label{2t-3}

In this section we prove part (c) of Theorem \ref{mainthm}, and the proof goes along the same lines as the proof of part (b) (given in the previous section). We choose the indecomposable class $\widetilde a_{2^t-4}\in H^{2^t-4}(\widetilde G_{2^t-3,3})$ to be $\widetilde i^*(\widetilde a_{2^t-4})$, where $\widetilde a_{2^t-4}\in H^{2^t-4}(\widetilde G_{2^t-2,3})$ is the indecomposable class chosen in the previous section, and $\widetilde i:\widetilde G_{2^t-3,3}\hookrightarrow\widetilde G_{2^t-2,3}$ is the embedding from Section \ref{preliminaries}. When we do so, then it is immediate that $\widetilde a_{2^t-4}^2=\widetilde i^*(\widetilde a_{2^t-4}^2)=0$, and Theorem \ref{mainthm}(c) follows.

So the only thing to be proved is the claim that we can take $\widetilde i^*(\widetilde a_{2^t-4})$ as the indecomposable class in $H^{2^t-4}(\widetilde G_{2^t-3,3})$, that is, $\widetilde i^*(\widetilde a_{2^t-4})\in H^{2^t-4}(\widetilde G_{2^t-3,3})\setminus\im p^*$. As in the previous cases, Lemma \ref{osnovna_lema}(a) ensures that it is enough to establish the following proposition.

\begin{proposition}
     For the multiplication map $H^{2^t-4}(G_{2^t-2,3})\stackrel{w_1}\longrightarrow H^{2^t-3}(G_{2^t-2,3})$ and the map $i^*:H^{2^t-4}(G_{2^t-2,3})\rightarrow H^{2^t-4}(G_{2^t-3,3})$ induced by the embedding $i:G_{2^t-3,3}\hookrightarrow G_{2^t-2,3}$, we have
    $\ker w_1\cap \ker i^* = 0$.
\end{proposition}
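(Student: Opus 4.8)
The plan is to follow the template established in Propositions~\ref{kercapker1} and \ref{kercapker3}, adapting the cohomology rings and the degrees involved. Take $f\in\mathbb Z_2[w_1,w_2,w_3]$ homogeneous of degree $2^t-4$ such that $[f]\in\ker w_1\cap\ker i^*$ in $H^{2^t-4}(G_{2^t-2,3})$. Since $H^*(G_{2^t-3,3})\cong\mathbb Z_2[w_1,w_2,w_3]/(\overline w_{2^t-5},\overline w_{2^t-4},\overline w_{2^t-3})$ and $H^*(G_{2^t-2,3})\cong\mathbb Z_2[w_1,w_2,w_3]/(\overline w_{2^t-4},\overline w_{2^t-3},\overline w_{2^t-2})$, the condition $i^*[f]=0$ together with Lemma~\ref{naturality}(a) forces $[f]=0$ in $H^{2^t-4}(G_{2^t-3,3})$, so $f$ lies in the ideal $(\overline w_{2^t-5},\overline w_{2^t-4},\overline w_{2^t-3})$ in degree $2^t-4$. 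Writing out the homogeneous part of that ideal in degree $2^t-4$, I would record that
\[
f=(\alpha w_1^2+\beta w_2)\overline w_{2^t-6}+\gamma w_1\overline w_{2^t-5}+\delta\overline w_{2^t-4}+\varepsilon w_1\overline w_{2^t-4}\big/\!\!\cdots,
\]
taking care to list exactly the monomial multiples of the three generators that land in degree $2^t-4$; for the degree count, $\deg\overline w_{2^t-4}=2^t-4$ contributes with a scalar, $\overline w_{2^t-5}$ with a factor $w_1$, and $\overline w_{2^t-3}$ cannot appear since its degree already exceeds $2^t-4$. Because $\overline w_{2^t-4},\overline w_{2^t-3}\in J_{2^t-2,3}$ (indeed $\overline w_{2^t-4}$ is itself a generator), those summands vanish in $H^{2^t-4}(G_{2^t-2,3})$, so without loss of generality $f=(\alpha w_1^2+\beta w_2)\overline w_{2^t-6}$ modulo the relevant ideal, with the $\overline w_{2^t-5}$-term also absorbed if its degree-$(2^t-4)$ multiple already lies in $J_{2^t-2,3}$.

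Next I would exploit $[f]\in\ker w_1$, i.e.\ $[w_1 f]=0$ in $H^{2^t-3}(G_{2^t-2,3})$, which means
\[
w_1 f=(\text{a combination of }w_1^j\overline w_{2^t-4},\,w_1^{j'}\overline w_{2^t-3},\,\overline w_{2^t-2}\text{ of degree }2^t-3).
\]
I would then substitute the chosen expression for $f$, use the recurrence (\ref{w_rekurzivno}) to rewrite the highest $\overline w$ (here $\overline w_{2^t-2}=w_1\overline w_{2^t-3}+w_2\overline w_{2^t-4}+w_3\overline w_{2^t-5}$) in terms of lower ones, and reduce to a polynomial identity of the shape
\[
(\alpha w_1^3+\beta w_1 w_2+\cdots)\overline w_{2^t-6}=(\text{lower-indexed }\overline w\text{ terms}),
\]
exactly paralleling (\ref{w1f1})--(\ref{w1f2}) in Proposition~\ref{kercapker2}. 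The scalars $\alpha,\beta$ are then isolated by comparing coefficients of carefully chosen ``test'' monomials: using the explicit formula (\ref{overline_w}), I would pick monomials such as $w_1^4 w_2^{2^{t-1}-3}$ and $w_1^3 w_2^{2^{t-1}-4}w_3$ (the degree-$(2^t-4)$ analogues of the ones in Tables~\ref{tab:alpha} and \ref{tab:beta}, shifted down by $w_2$) for which only one summand has a nonzero binomial coefficient modulo $2$, forcing $\alpha=0$ and then $\beta=0$ in turn. Concluding $\alpha=\beta=0$ gives $f=0$ in $H^{2^t-4}(G_{2^t-2,3})$, completing the proof.

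The main obstacle I anticipate is precisely the bookkeeping at the start: correctly enumerating which monomial multiples of $\overline w_{2^t-5}$, $\overline w_{2^t-4}$, $\overline w_{2^t-3}$ fall into degree $2^t-4$ and $2^t-3$, and verifying that the spurious terms genuinely lie in $J_{2^t-2,3}$ so that they may be discarded. A secondary delicate point is choosing the test monomials so that their binomial coefficients $\binom{a+b+c}{a}\binom{b+c}{b}\bmod 2$ vanish for every summand except the one carrying $\alpha$ (respectively $\beta$); by Lucas's theorem this amounts to a parity analysis of the binary digits of $2^{t-1}-k$ for small $k$, analogous to the computations already carried out in Tables~\ref{tab:alpha} and \ref{tab:beta}, and I expect the same pattern to hold one degree lower. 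Once the test-monomial coefficients are pinned down, the argument closes mechanically.
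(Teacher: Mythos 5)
Your high-level plan coincides with the paper's: use $i^*[f]=0$ to place $f$ in the degree-$(2^t-4)$ part of the ideal defining $H^*(G_{2^t-3,3})$, discard the summands already lying in the ideal defining $H^*(G_{2^t-2,3})$, and then exploit $[w_1f]=0$ by comparing coefficients of test monomials via (\ref{overline_w}). However, the bookkeeping you yourself identify as the crux is done incorrectly, and the errors are not merely cosmetic. The degree-$(2^t-4)$ homogeneous part of $(\overline w_{2^t-5},\overline w_{2^t-4},\overline w_{2^t-3})$ is spanned by $w_1\overline w_{2^t-5}$ and $\overline w_{2^t-4}$ alone --- your own prose says exactly this --- so the normal form is $f=\alpha w_1\overline w_{2^t-5}+\beta\overline w_{2^t-4}$; the polynomial $\overline w_{2^t-6}$, around which your displayed decomposition and the rest of your argument are organized, never enters (you have imported the shape of the computation from Proposition \ref{kercapker2}, where the degree gap between $f$ and the lowest generator is $2$ rather than $1$). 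After discarding $\beta\overline w_{2^t-4}\in I_{2^t-2,3}$, the surviving term $\alpha w_1\overline w_{2^t-5}$ cannot be ``absorbed'': by (\ref{w_rekurzivno}) it differs from the generator $\overline w_{2^t-4}$ by $w_2\overline w_{2^t-6}+w_3\overline w_{2^t-7}\neq0$, so it does not lie in $I_{2^t-2,3}$ --- and if it did, the proposition would be vacuous. Consequently the identity to be analyzed is $\alpha w_1^2\overline w_{2^t-5}=\lambda w_1\overline w_{2^t-4}+\mu\overline w_{2^t-3}$ (note also that $\overline w_{2^t-2}$ has degree $2^t-2$ and cannot occur in the degree-$(2^t-3)$ part of $I_{2^t-2,3}$, so no recurrence step is needed), not an identity involving $\overline w_{2^t-6}$. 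Finally, your proposed test monomials $w_1^4w_2^{2^{t-1}-3}$ and $w_1^3w_2^{2^{t-1}-4}w_3$ both have degree $2^t-2$, whereas the identity lives in the odd degree $2^t-3$, so they can detect nothing; no ``shift by $w_2$'' of the monomials in Tables \ref{tab:alpha} and \ref{tab:beta} can land in an odd degree.

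For comparison, the paper closes the corrected identity as follows: the coefficient of $w_1^{2^t-5}w_2$ in the three terms is $0$, $1$, $0$ by (\ref{overline_w}), forcing $\lambda=0$; the monomial $w_1w_2^{2^{t-1}-2}$ occurs in $\overline w_{2^t-3}$ with coefficient $1$ but cannot occur on the left-hand side, which is divisible by $w_1^2$, forcing $\mu=0$; and then $\alpha w_1^2\overline w_{2^t-5}=0$ gives $\alpha=0$ outright, with no further test monomial. So while your strategy is the right one and is salvageable, essentially every concrete formula in the proposal --- the decomposition of $f$, the list of degree-$(2^t-3)$ ideal elements, and the test monomials --- has to be replaced before the argument goes through.
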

\begin{proof}
    For a polynomial $f\in\mathbb Z_2[w_1,w_2,w_3]$ such that $[f]\in \ker w_1\cap \ker i^*$ we have $[f]=i^*[f]=0$ in $H^{2^t-4}(G_{2^t-3,3})$, and so $f = \alpha w_1 \overline w_{2^t-5} + \beta \overline w_{2^t-4}$ for some $\alpha,\beta\in\mathbb Z_2$. But $\overline w_{2^t-4}\in J_{2^t-2,3}$, which means that in $H^{2^t-4}(G_{2^t-2,3})$
    \[[f] = [\alpha w_1 \overline w_{2^t-5} + \beta \overline w_{2^t-4}] = [\alpha w_1 \overline w_{2^t-5}],\]
    and so we can take $\beta=0$. We wish to prove that $\alpha=0$ as well.

We also know that $[w_1f] = 0$ in $H^{2^t-3}(G_{2^t-2,3})$, and conclude $w_1f = \lambda w_1 \overline w_{2^t-4} + \mu \overline w_{2^t-3}$, for some $\lambda,\mu\in\mathbb Z_2$. Combining these equations, we obtain
\begin{equation}\label{eq4}
    \alpha w_1^2 \overline w_{2^t-5} = \lambda w_1 \overline w_{2^t-4} + \mu \overline w_{2^t-3}.
\end{equation}

Observe the monomial $w_1^{2^t-5}w_2$. We use (\ref{overline_w}) and in Table \ref{tab:4} list its coefficients in each of the summands from (\ref{eq4}).
\begin{table}[h]
\renewcommand{\arraystretch}{1.5} %
\begin{tabular}{|c|c|c|c|c|}
 \hline
 summand & $a$ & $b$ & $c$ & $\binom{a+b+c}{a}\binom{b+c}{b}$ \\ [1ex]
 \hline
$\alpha w_1^2 \overline w_{2^t-5}$ & $2^t-7$ & 1 & 0 &   0\\ 
$\lambda w_1 \overline w_{2^t-4}$ & $2^t-6$ & 1 & 0 &   1\\
$\mu \overline w_{2^t-3}$ & $2^t-5$ & 1 & 0 &   0\\
 \hline
  \end{tabular}
\caption{\label{tab:4} Coefficients of $w_1^{2^t-5}w_2$ in (\ref{eq4})}
\end{table}

From the table we conclude that $\lambda = 0$. This means equation (\ref{eq4}) now becomes
\begin{equation}\label{eq5}
    \alpha w_1^2 \overline w_{2^t-5} = \mu \overline w_{2^t-3}.
\end{equation}
The monomial $w_1w_2^{2^{t-1}-2}$ appears on the right-hand side of (\ref{eq5}) with the coefficient $\mu\binom{2^{t-1}-1}{1}\binom{2^{t-1}-2}{0} = \mu$, and it is obvious that it cannot appear on the left-hand side. We conclude that $\mu=0$, and consequently $\alpha=0$. This finishes the proof.
\end{proof}

\bibliographystyle{amsplain}

\begin{thebibliography}{10}


%
\bibitem{BasuChakraborty}
{S.\ Basu and P.\ Chakraborty}, {\it On the cohomology ring and upper characteristic rank of Grassmannian
of oriented $3$-planes}, J.\ Homotopy Relat.\ Struct.\ {\bf 15} (2020) 27--60.


%
\bibitem{Borel}
{ A.\ Borel}, {\it La cohomologie mod 2 de certains espaces homog\`enes}, Comm.\ Math.\ Helv.\ {\bf 27} (1953) 165--197.

%
\bibitem{CP}
{U.\ A.\ Colovi\'c and B.\ I.\ Prvulovi\'c,} {\it Gr\"obner bases in the mod $2$ cohomology of oriented Grassmann manifolds $\widetilde G_{2^t,3}$}, Math.\ Slovaca {\bf 74(1)} (2024) 195--208.


%
\bibitem{Fukaya}
{ T.\ Fukaya}, {\it Gr\"obner bases of oriented Grassmann manifolds}, {Homol.\ Homotopy Appl.} {\bf 10:2} (2008) 195--209.

%
\bibitem{KorbasRusin:Palermo}
{J.\ Korba\v s and T.\ Rusin,} {\it A note on the $\mathbb Z_2$-cohomology algebra of oriented Grassmann manifolds}, {Rend.\ Circ.\ Mat.\ Palermo, II.\ Ser} {\bf 65} (2016) 507--517.

%
\bibitem{MilnorSt}
J.\ W.\ Milnor, J.\ D.\ Stasheff, \textit{Characteristic Classes}, Ann.\ of Math.\ Studies \textbf{76}, Princeton University Press, New Jersey (1974).



\end{thebibliography}

\end{document}